 \numberwithin{equation}{section}
\theoremstyle{plain}
\newtheorem{thm}{Theorem}[section]
\newtheorem{cor}[thm]{Corollary}
\newtheorem{prop}[thm]{Proposition}
\theoremstyle{definition}
\theoremstyle{remark}
\newtheorem{rem}[thm]{Remark}
\newcommand{\N}{\mathbb{N}}
\newcommand{\R}{\mathbb{R}}
\newcommand{\bp}{\begin{proof}[\ensuremath{\mathbf{Proof}}]}
\newcommand{\bs}{\begin{proof}[\ensuremath{\mathbf{Solution}}]}
\newcommand{\ep}{\end{proof}}
\newcommand{\be}{\begin{equation}}
\newcommand{\ee}{\end{equation}}
\begin{document}
% Title
\title{Extremals in nonlinear potential theory}

% Name
\author{Ryan Hynd\footnote{Department of Mathematics, University of Pennsylvania.  Partially supported by NSF grant DMS-1554130.}\;  and Francis Seuffert\footnote{Department of Mathematics, University of Pennsylvania.}}

\maketitle 

%  Abstract  
\begin{abstract}
We consider the PDE $-\Delta_pu=\rho$, where $\rho$ is a signed Borel measure on $\R^n$. For each $p>n$, we characterize solutions as extremals of a generalized Morrey inequality determined by $\rho$. 
\end{abstract}

%\tableofcontents

%%%%%%%%%%%%%%%%%%%%%%%%%%%%%%%%%%%%%%%%%%%%%%%
\section{Introduction}
% Setting 
In this paper, we will study functional inequalities on the homogeneous Sobolev space 
\be
{\cal  D}^{1,p}(\R^n):=\left\{u\in L^1_{\text{loc}}(\R^n): u_{x_i}\in L^p(\R^n)\;\text{for $i=1,\dots, n$}\right\}
\ee
for 
$$
p>n.
$$
The best known functional inequality on this space is Morrey's inequality which asserts that there is a constant $C$ depending only on $n$ and $p$ such that
\be\label{MorreyIneq}
[u]_{1-n/p}\le C\|Du\|_p
\ee
for all $u\in {\cal  D}^{1,p}(\R^n)$ \cite{MR3409135, MR1814364, MR1501936, MR2492985,MR0109940}.  Here we have written
\be
[u]_{1-n/p}:=\sup_{x\ne y}\left\{\frac{|u(x)-u(y)|}{|x-y|^{1-n/p}}\right\}
\ee
and 
\be
\|Du\|_p:=\left(\int_{\R^n}|Du|^pdx\right)^{1/p}.
\ee
Moreover,  we have identified each $u\in {\cal D}^{1,p}(\R^n)$ with its $1-n/p$ H\"older continuous representative.

% Previous work 
\par An extremal of Morrey's inequality is a function $u\in {\cal  D}^{1,p}(\R^n)$ for which equality holds in \eqref{MorreyIneq}. 
In recent work, we established that a nonconstant extremal $u$ of Morrey's inequality exists \cite{HyndSeuf}.  In particular, 
we characterized $u$ as a solution of the PDE 
\be\label{TwoDeltaPDE}
-\Delta_pu=c(\delta_{x_0}-\delta_{y_0})
\ee
in $\R^n$ for some constant $c$.  That is, 
$$
\int_{\R^n}|Du|^{p-2}Du\cdot Dvdx=c(v(x_0)-v(y_0))
$$
for each $v\in {\cal D}^{1,p}(\R^n)$. In equation \eqref{TwoDeltaPDE}, $x_0,y_0\in \R^n$ are distinct points for which 
\be\label{TwoPointForMorrey}
[u]_{1-n/p}=\frac{|u(x_0)-u(y_0)|}{|x_0-y_0|^{1-n/p}}.
\ee
It also follows from the variational structure of this PDE that $u$ is an extremal satisfying \eqref{TwoPointForMorrey} if and only if
$$
\|Du\|_p\le \|Dv\|_p
$$
for each $v\in {\cal D}^{1,p}(\R^n)$ with
$$
v(x_0)-v(y_0)=u(x_0)-u(y_0).
$$

% New seminorm 
\par It turns out that it is possible to extend these results to a class of inequalities which generalize Morrey's inequality.  To this end, we will 
consider signed Borel measures $\rho$ on $\R^n$ which have the following properties
\be\label{rhoAssump}
\begin{cases}
\text{the support of $\rho$ is compact},\\\\
\rho(\R^n)=0, \text{ and}\\\\
\displaystyle\int_{\R^n}yd\rho(y)\neq 0.
\end{cases}
\ee
For such a measure $\rho$, we define the seminorm
\be\label{rhoSemiNorm}
[u]_\rho:=\sup_{S\in {\cal S}(n)}\left\{\frac{\displaystyle\left|\int_{\R^n}u(x)d\mu(x)\right|}{\displaystyle\left|\int_{\R^n}xd\mu(x)\right|^{1-n/p}}: \mu=S{_\#}\rho\right\},\quad u\in {\cal D}^{1,p}(\R^n).
\ee

\par Here ${\cal S}(n)$ is the collection of similarity transformations of $\R^n$.  That is, each $S\in {\cal S}(n)$ is of the form 
\be\label{SimilarityPrototype}
S(y)=\lambda Oy+z, \quad y\in \R^n
\ee
for some $\lambda>0$, $O\in\textup{O}(n)$, and $z\in \R^n$. Further, $ S{_\#}\rho$ is the usual push forward measure defined as
$$
(S{_\#}\rho)(A):=\rho(S^{-1}(A))
$$
for Borel $A\subset \R^n$. Equivalently 
$$
\int_{\R^n}w(x)d(S{_\#}\rho)(x)=\int_{\R^n}w(S(y))d\rho(y)
$$
for each continuous $w: \R^n\rightarrow \R$.

% Preview of examples 
\par It is not hard to check that if
$$
\rho=\delta_{x_0}-\delta_{y_0}
$$
for any two distinct $x_0, y_0\in \R^n$, then 
$$
[u]_\rho=[u]_{1-n/p}
$$
for each $u\in{\cal D}^{1,p}(\R^n)$.   A more general example of $\rho$ satisfying \eqref{rhoAssump} is 
$$
\rho=\sum^N_{i=1}c_i\delta_{y_i},
$$
where $\sum^N_{i=1}c_i=0$ and $\sum^N_{i=1}c_iy_i\neq 0$.  Another example occurs whenever 
$$
\rho=\text{div} F
$$
for an appropriate class of mappings $F:\R^n\rightarrow \R^n$ with $\int_{\R^n}F(y)dy\neq 0$. We will also explain 
how to associate such an $F$ with each $\rho$ satisfying \eqref{rhoAssump} when $n=1$.

% Main results
\par We will show that each $\rho$ satisfying \eqref{rhoAssump} leads to a generalized Morrey inequality on ${\cal D}^{1,p}(\R^n)$. In particular, we shall make the basic observation that there is a constant $C$ depending only on $n,p,$ and $\rho$ such that 
\be\label{GeneralizedMorrey}
[u]_\rho\le C\|Du\|_p,\quad u\in {\cal D}^{1,p}(\R^n).
\ee
Furthermore, we will give a simple proof of the existence of a nonconstant extremal $u$ of this inequality and characterize extremals as follows. 
\begin{thm}\label{GenEquivalent}
Suppose $S\in {\cal S}(n)$ and set $\mu=S{_\#}\rho$.     The following are equivalent. 

\begin{enumerate}[(i)]

\item $u\in {\cal D}^{1,p}(\R^n)$ is an extremal of \eqref{GeneralizedMorrey} with 
\be\label{seminormMaxu}
[u]_\rho=\frac{\displaystyle\left|\int_{\R^n}u(x)d\mu(x)\right|}{\displaystyle\left|\int_{\R^n}xd\mu(x)\right|^{1-n/p}}.
\ee

\item There is $c\in \R$ for which
\be\label{MuDeltaPDE}
-\Delta_pu=c\mu
\ee
in $\R^n$.

\item For each $v\in {\cal  D}^{1,p}(\R^n)$ with 
$$
\int_{\R^n}v(x)d\mu(x)=\int_{\R^n}u(x)d\mu(x),
$$
the following inequality holds
$$
\|Du\|_p\le \|Dv\|_p.
$$
\end{enumerate}
\end{thm}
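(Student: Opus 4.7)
The plan is to establish the implications cyclically: (i) $\Rightarrow$ (ii) $\Rightarrow$ (iii) $\Rightarrow$ (i). For (i) $\Rightarrow$ (ii), I would first observe that, with $\mu$ fixed and $C$ the sharp constant in \eqref{GeneralizedMorrey}, the inequality
$$\frac{|\int_{\R^n} v\,d\mu|}{\|Dv\|_p} \le \frac{[v]_\rho \left|\int_{\R^n}x\,d\mu\right|^{1-n/p}}{\|Dv\|_p} \le C\left|\int_{\R^n}x\,d\mu\right|^{1-n/p}$$
holds for every $v\in \mathcal{D}^{1,p}(\R^n)$, with equality at $v = u$ by \eqref{seminormMaxu} and extremality. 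Thus $u$ maximizes the ratio $J(v) := |\int_{\R^n} v\,d\mu|/\|Dv\|_p$ over $\mathcal{D}^{1,p}(\R^n)$, and in particular $\int_{\R^n} u\,d\mu \ne 0$. A first-variation computation $\frac{d}{dt}|_{t=0} J(u+t\phi) = 0$ for arbitrary $\phi \in \mathcal{D}^{1,p}(\R^n)$ then rearranges to
$$\int_{\R^n} |Du|^{p-2}Du\cdot D\phi\,dx = c\int_{\R^n}\phi\,d\mu, \qquad c := \frac{\|Du\|_p^p}{\int_{\R^n} u\,d\mu},$$
which is the weak form of \eqref{MuDeltaPDE}.

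For (ii) $\Rightarrow$ (iii), I would test \eqref{MuDeltaPDE} against $\phi := v - u$. The constraint on $v$ gives $\int_{\R^n} \phi\,d\mu = 0$, so $\|Du\|_p^p = \int |Du|^{p-2}Du\cdot Dv\,dx$, and H\"older's inequality yields $\|Du\|_p^p \le \|Du\|_p^{p-1}\|Dv\|_p$, which is the claim (the $u$-constant case being trivial).

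For (iii) $\Rightarrow$ (i), I would first apply the scaling $v \mapsto (\int u\,d\mu / \int v\,d\mu)\,v$, valid whenever $\int_{\R^n} v\,d\mu \ne 0$, to convert (iii) into $J(v) \le J(u)$ for all such $v$. Then, via the change of variables $v(x) = w(S^{-1}(x))$ together with the transformations $\|Dv\|_p = \lambda^{n/p - 1}\|Dw\|_p$, $\int_{\R^n} v\,d\mu = \int_{\R^n} w\,d\rho$, and $\int_{\R^n} x\,d\mu = \lambda O \int_{\R^n} y\,d\rho$ coming from \eqref{rhoAssump} and \eqref{SimilarityPrototype}, I would verify that the ratio $J(v)/|\int x\,d(S_\#\rho)|^{1-n/p}$ is invariant under this substitution and that its supremum over $v$ is therefore independent of $S \in \mathcal{S}(n)$, hence equal to $C$. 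Evaluating at $u$ gives $J(u)/|\int x\,d\mu|^{1-n/p} = C$, and the resulting chain
$$C\|Du\|_p \ge [u]_\rho \ge \frac{|\int_{\R^n} u\,d\mu|}{|\int_{\R^n} x\,d\mu|^{1-n/p}} = C\|Du\|_p$$
forces equality throughout, which is (i).

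The main obstacle is the invariance check in the last step, since one must carefully track how each of $\|Dv\|_p$, $\int v\,d\mu$, and $\int x\,d\mu$ scales under \eqref{SimilarityPrototype}, and it is precisely the Morrey exponent $1-n/p$ that makes these scalings cancel. Step (i) $\Rightarrow$ (ii) is otherwise a standard Lagrange-multiplier computation, contingent only on ruling out $\int u\,d\mu = 0$, which the attainment hypothesis handles cleanly.
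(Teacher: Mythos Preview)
Your argument is correct. The implications (i)$\Rightarrow$(ii) and (ii)$\Rightarrow$(iii) match the paper's proof closely; the paper uses the convexity inequality $|b|^p\ge |a|^p+p|a|^{p-2}a\cdot(b-a)$ in place of your H\"older step for (ii)$\Rightarrow$(iii), but these are interchangeable one-liners.

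Your (iii)$\Rightarrow$(i), however, takes a genuinely different route. The paper invokes Corollary~\ref{ExistCor}, which guarantees an extremal $w$ whose seminorm is attained at the given $\mu$; after rescaling so that $\int w\,d\mu=\int u\,d\mu$, hypothesis (iii) yields $\|Du\|_p\le\|Dw\|_p$, and a short chain $[w]_\rho\le[u]_\rho\le C_*\|Du\|_p\le C_*\|Dw\|_p=[w]_\rho$ closes the loop. You instead show directly, via the change of variables $v(x)=w(S^{-1}(x))$, that $\sup_v |\int v\,d(S_\#\rho)|/(\|Dv\|_p\,|\int x\,d(S_\#\rho)|^{1-n/p})$ is independent of $S$, and then identify this common value with $C_*$ by swapping the two suprema in its definition. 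The payoff is that your proof of the equivalence does not rely on the existence result (Proposition~\ref{ExistenceExtProp} and Corollary~\ref{ExistCor}), so the theorem becomes logically self-contained; the paper's version, by contrast, is a few lines shorter once that existence is already available. One small point you should add: dispose of the degenerate case $\int u\,d\mu=0$ separately (applying (iii) with $v\equiv 0$ forces $u$ constant, and then all three statements hold trivially), since both your rescaling $v\mapsto(\int u\,d\mu/\int v\,d\mu)\,v$ and your formula $c=\|Du\|_p^p/\int u\,d\mu$ presume the nonzero case.
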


\par This theorem has several corollaries.  Part $(iii)$ implies that extremals of \eqref{GeneralizedMorrey} are conveniently generated by a minimizing the seminorm 
$$
{\cal D}^{1,p}(\R^n)\ni v\mapsto \|Dv\|_p
$$
subject to the constraint 
$$
\int_{\R^n}vd\rho=1.
$$
We will also see how Theorem \ref{GenEquivalent} implies that an extremal $u$ satisfying \eqref{seminormMaxu} is uniformly bounded with
$$
\min_{\textup{supp}(\mu)}u\le u(x)\le \max_{\textup{supp}(\mu)}u, \quad x\in \R^n 
$$
and that the limit
$$
\lim_{|x|\rightarrow\infty}u(x)
$$
exists. Another consequence of this theorem is that extremals are essentially uniquely determined, which will enable us to verify that extremals inherit symmetry and antisymmetry properties from $\rho$.

% Stability 
\par In addition, we will argue that condition $(i)$ in Theorem \ref{GenEquivalent} is always satisfied in the sense that for each $v\in {\cal D}^{1,p}(\R^n)$ there is $S\in {\cal S}(n)$ such that 
\be
[v]_\rho=\frac{\displaystyle\left|\int_{\R^n}v(x)d\mu(x)\right|}{\displaystyle\left|\int_{\R^n}xd\mu(x)\right|^{1-n/p}}
\ee
for $\mu=S{_\#}\rho$. Even more remarkably, we can exploit this fact to refine the generalized Morrey inequality \eqref{GeneralizedMorrey} with the following stability estimates.

\begin{thm}\label{StabilityThm}
Suppose $v\in {\cal D}^{1,p}(\R^n)$ and $C$ is a constant for which \eqref{GeneralizedMorrey} holds. There is an extremal  $u\in {\cal D}^{1,p}(\R^n)$ such that 
\be
\left(\frac{C}{2}\right)^p\|Du-Dv\|_p^p+[v]_\rho^p\le C^p\|Dv\|_p^p
\ee
for $2<p<\infty$ and 
\be
\left(\frac{C}{2}\right)^{\frac{1}{p-1}}\|u'-v'\|_p^{\frac{p}{p-1}}+[v]_\rho^{\frac{p}{p-1}}\le C^{\frac{p}{p-1}}\|v'\|_p^{\frac{p}{p-1}}
\ee 
for $1<p\le 2$. 
\end{thm}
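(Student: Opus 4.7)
The plan is to construct an extremal $u$ via the variational characterization in Theorem \ref{GenEquivalent} and then derive each stability bound from an appropriate Clarkson inequality in $L^p$. Assuming $[v]_\rho > 0$ (the zero case is trivial), I would first invoke the attainment claim highlighted in the paragraph preceding Theorem \ref{StabilityThm}: there exists $S \in \mathcal{S}(n)$ such that, writing $\mu = S_\#\rho$,
$$[v]_\rho = \frac{|\int_{\R^n} v\,d\mu|}{|\int_{\R^n} x\,d\mu|^{1-n/p}}.$$
Let $u \in \mathcal{D}^{1,p}(\R^n)$ be a minimizer of $\|Dw\|_p$ subject to the constraint $\int_{\R^n} w\,d\mu = \int_{\R^n} v\,d\mu$; existence follows from the direct method. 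Then $u$ solves $-\Delta_p u = c\mu$ for some $c \in \R$, so by Theorem \ref{GenEquivalent} it is an extremal realizing its seminorm at $\mu$. In particular, $[u]_\rho = [v]_\rho$ and $C\|Du\|_p = [v]_\rho$.

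The midpoint $w = (u+v)/2$ satisfies $\int w\,d\mu = \int u\,d\mu$, so Theorem \ref{GenEquivalent}(iii), applied to $w$ and to $v$, gives
$$\|Du\|_p \le \left\|\frac{Du+Dv}{2}\right\|_p \qquad \text{and} \qquad \|Du\|_p \le \|Dv\|_p.$$
For $2 < p < \infty$, I combine these with Clarkson's first inequality
$$\left\|\frac{Du+Dv}{2}\right\|_p^p + \left\|\frac{Du-Dv}{2}\right\|_p^p \le \tfrac{1}{2}\bigl(\|Du\|_p^p + \|Dv\|_p^p\bigr),$$
substituting the lower bound for $\|(Du+Dv)/2\|_p^p$ and rearranging to obtain $2^{-(p-1)}\|Du-Dv\|_p^p + \|Du\|_p^p \le \|Dv\|_p^p$. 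Multiplying by $C^p$ and invoking $C\|Du\|_p = [v]_\rho$ produces $(C^p/2^{p-1})\|Du-Dv\|_p^p + [v]_\rho^p \le C^p\|Dv\|_p^p$, which implies the first claimed estimate since $C^p/2^{p-1} \ge (C/2)^p$.

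For $1 < p \le 2$ (hence $n = 1$ under $p > n$), the same scheme works with Clarkson's second inequality
$$\left\|\frac{u'+v'}{2}\right\|_p^{p/(p-1)} + \left\|\frac{u'-v'}{2}\right\|_p^{p/(p-1)} \le \left(\tfrac{1}{2}(\|u'\|_p^p + \|v'\|_p^p)\right)^{1/(p-1)}.$$
Substituting $\|(u'+v')/2\|_p \ge \|u'\|_p$ on the left and $\|u'\|_p \le \|v'\|_p$ on the right gives $\|u'\|_p^{p/(p-1)} + 2^{-p/(p-1)}\|u'-v'\|_p^{p/(p-1)} \le \|v'\|_p^{p/(p-1)}$; multiplying through by $C^{p/(p-1)}$ and using $C\|u'\|_p = [v]_\rho$ then yields the second claimed estimate.

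The main obstacle is the attainment of the supremum defining $[v]_\rho$ used in the first step. Because the similarity group $\mathcal{S}(n)$ is noncompact — it contains arbitrary dilations and translations — one must rule out both degeneration and escape along any maximizing sequence, exploiting the finiteness of $[v]_\rho$ and the compact support of $\rho$. Once this is granted (and the paper settles it separately), the remainder of the proof reduces to a clean combination of Theorem \ref{GenEquivalent}(iii) with the uniform convexity of $L^p$ expressed through Clarkson's inequalities.
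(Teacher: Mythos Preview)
Your approach is essentially the paper's: pick $\mu=S_\#\rho$ attaining $[v]_\rho$, take an extremal $u$ with $\int u\,d\mu=\int v\,d\mu$, and combine Clarkson's inequality with the minimality of $u$ from Theorem~\ref{GenEquivalent}(iii).

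One slip to fix: you assert $C\|Du\|_p=[v]_\rho$, but the theorem allows \emph{any} constant $C$ for which \eqref{GeneralizedMorrey} holds, not only the sharp $C_*$. Extremality of $u$ gives $C_*\|Du\|_p=[u]_\rho=[v]_\rho$, so in general you only have $C\|Du\|_p\ge[v]_\rho$. Fortunately that inequality runs in the direction you need---after multiplying $2^{-(p-1)}\|Du-Dv\|_p^p+\|Du\|_p^p\le\|Dv\|_p^p$ by $C^p$ you are replacing $C^p\|Du\|_p^p$ by the smaller quantity $[v]_\rho^p$ on the left---so your argument survives intact (and indeed yields the slightly better coefficient $C^p/2^{p-1}$ in place of $(C/2)^p$). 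The paper sidesteps this wrinkle by observing $[(u+v)/2]_\rho=[v]_\rho$ and applying \eqref{GeneralizedMorrey} directly to the midpoint, so that the given constant $C$ appears throughout and $C_*$ never enters.
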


% Potential theory 
\par The reader will observe that the proofs of our main result are not especially difficult and only require elementary variational arguments. We emphasize that the main point of this note is to highlight the connection between the PDE \eqref{MuDeltaPDE} and the generalized Morrey inequality \eqref{GeneralizedMorrey}. PDEs such as \eqref{MuDeltaPDE} arise in nonlinear potential theory and have been studied in great depth for many years \cite{MR2867756,MR2731714,MR4030249,MR2729305,MR2823872,MR2958962,MR3869444,MR1951245,MR3004772,MR4046205}.  Stability estimates for Sobolev inequalities have also been of great interest in analysis and geometry \cite{MR2538501,MR2395175,MR3896203,MR3404715,MR4048334,1610.06869,MR3695890} and this work provides a simple method to obtain such estimates for Morrey type inequalities.

% Organization 
\par  This paper is organized as follows. In section \ref{PrelimSect}, we show that the seminorm \eqref{rhoSemiNorm} is controlled by a constant times the $1-n/p$ H\"older seminorm and that the maximum ratio determining the seminorm \eqref{rhoSemiNorm} is always achieved. Next we prove nonconstant extremals of \eqref{GeneralizedMorrey} exist in section \ref{NoncontSect}. Then in section \ref{EquivSect}, we prove Theorem \ref{GenEquivalent} and discuss its various corollaries.  In section \ref{StabilitySect}, we issue a short proof of Theorem \ref{StabilityThm}. Finally, in section \ref{ExampleSect}, we write down the extremals and best constant in one spatial dimension and give a duality formula for the sharp constant in any dimension.

%%%%%%%%%%%%%%%%%%%%%%%%%%%%%%%%%%%%%%%%%%%%%%%
\section{Preliminaries}\label{PrelimSect}
We will first verify the basic assertions that $[u]_\rho$ is controlled by the $1-n/p$ H\"older seminorm of $u$ and that $[u]_\rho=0$ implies $u$ is identically equal to a constant. 
\begin{prop}\label{SeminormProp}
$(i)$ There is a constant $A$ such that 
$$
[u]_\rho\le A[u]_{1-n/p}
$$
for all $u\in {\cal D}^{1,p}(\R^n)$. $(ii)$ If 
\be\label{constantCond}
[u]_\rho=0,
\ee
then $u$ is constant throughout $\R^n$.
\end{prop}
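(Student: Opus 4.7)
For part (i), my plan is to exploit the zero total mass $\rho(\R^n)=0$ in order to subtract a constant inside the integral against $\mu$. Fix any $y_0\in\text{supp}(\rho)$ and write the similarity as $S(y)=\lambda Oy+z$, so that $\mu=S_{\#}\rho$ and
\[
\int_{\R^n} u\,d\mu=\int_{\R^n}\bigl[u(S(y))-u(S(y_0))\bigr]d\rho(y).
\]
H\"older continuity then gives $|u(S(y))-u(S(y_0))|\le[u]_{1-n/p}\lambda^{1-n/p}|y-y_0|^{1-n/p}$, and bounding $|y-y_0|$ by $D:=\text{diam}(\text{supp}(\rho))$ yields $\left|\int u\,d\mu\right|\le[u]_{1-n/p}\,\lambda^{1-n/p}D^{1-n/p}|\rho|(\R^n)$. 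Since $\int x\,d\mu=\lambda O\int y\,d\rho$, the denominator in \eqref{rhoSemiNorm} equals $\left(\lambda\bigl|\int y\,d\rho\bigr|\right)^{1-n/p}$, so the factors of $\lambda^{1-n/p}$ cancel and (i) follows with a constant $A$ depending only on $n$, $p$, and $\rho$.

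For part (ii), the difficulty is that $[u]_\rho=0$ provides only the integral identities $\int u\,d(S_{\#}\rho)=0$ for every similarity $S$, while $u$ is merely H\"older continuous; a direct Taylor expansion in $\lambda$ is not available because $\lambda^{-1}[u(\lambda Oy+z)-u(z)]$ may blow up like $\lambda^{-n/p}$. My plan is to reduce to the smooth case by mollification. Setting $u_\epsilon:=u*\eta_\epsilon$ for a standard mollifier $\eta_\epsilon$, the Fubini exchange
\[
\int u_\epsilon(S(y))\,d\rho(y)=\int\eta_\epsilon(s)\int u(\lambda Oy+(z-s))\,d\rho(y)\,ds
\]
shows that $[u_\epsilon]_\rho=0$ as well, since each inner integral has the form $\int u\,d(\tilde S_{\#}\rho)$ for $\tilde S(y)=\lambda Oy+(z-s)\in\mathcal S(n)$ and so vanishes by hypothesis. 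The Fubini swap is routine given the compact supports of $\rho$ and $\eta_\epsilon$ together with the continuity of $u$.

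With $u_\epsilon$ now smooth, fix $O\in\text{O}(n)$ and $z\in\R^n$ and consider $g(\lambda):=\int u_\epsilon(\lambda Oy+z)\,d\rho(y)$. By the previous step this vanishes for every $\lambda>0$, extends smoothly to $\lambda=0$ with $g(0)=u_\epsilon(z)\rho(\R^n)=0$, and satisfies
\[
0=g'(0)=Du_\epsilon(z)\cdot(Ov),\qquad v:=\int_{\R^n}y\,d\rho(y)\ne 0.
\]
Varying $O\in\text{O}(n)$ sweeps $Ov$ over the entire sphere of radius $|v|>0$, which spans $\R^n$, so $Du_\epsilon(z)=0$ for every $z$. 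Hence $u_\epsilon$ is constant, and sending $\epsilon\downarrow 0$ with locally uniform convergence $u_\epsilon\to u$ (valid since $u$ is continuous) yields that $u$ is constant. I expect the main obstacle to be the mollification bookkeeping needed to propagate $[\cdot]_\rho=0$ from $u$ to $u_\epsilon$; once this is done the differentiation-plus-rotation argument is essentially immediate.
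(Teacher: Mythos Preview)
Your proof is correct. Part (i) is essentially the paper's argument: both subtract a constant (you use $u(S(y_0))$ for a fixed $y_0\in\text{supp}(\rho)$, the paper uses $u(z)$ with $z$ the translation part of $S$), apply the H\"older bound to pull out $\lambda^{1-n/p}$, and cancel it against the denominator. The paper's choice gives the slightly tidier constant $A=\bigl(\int |y|^{1-n/p}\,d|\rho|\bigr)\big/\bigl|\int y\,d\rho\bigr|^{1-n/p}$, but this is cosmetic.

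Part (ii) reaches the same core identity $Du(z)\cdot O\!\int y\,d\rho(y)=0$ that the paper derives, but by a different route. Your worry that $\lambda^{-1}[u(\lambda Oy+z)-u(z)]$ may blow up is unfounded at points of differentiability: since $u\in{\cal D}^{1,p}(\R^n)$ with $p>n$, it is classically differentiable almost everywhere (the paper invokes this as ``Rademacher's Theorem,'' citing Evans--Gariepy), and on $\text{supp}(\rho)$ the difference quotient converges uniformly to $Du(z)\cdot Oy$. The paper therefore works directly with $u$ and concludes $Du=0$ a.e. Your mollification detour is a valid alternative that trades the a.e.\ differentiability theorem for the elementary observation that $[u_\epsilon]_\rho=0$ persists under convolution; it is slightly longer but avoids appealing to the fine differentiability of Sobolev functions.
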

\begin{proof}
$(i)$ Suppose $\mu=S{_\#}\rho$, where $S(y)=\lambda Oy+z$ for some $\lambda>0$, $O\in\textup{O}(n)$ and $z\in \R^n$.  Note 
that
\begin{align*}
\int_{\R^n}u(x)d\mu(x)&=\int_{\R^n}(u(x)-u(z))d\mu(x)\\
&=\int_{\R^n}(u(\lambda Oy+z)-u(z))d\rho(y)\\
&\le [u]_{1-n/p}\int_{\R^n}|(\lambda Oy+z)-z|^{1-n/p}d|\rho|(y)\\
&= [u]_{1-n/p}\int_{\R^n}|y|^{1-n/p}d|\rho|(y)\cdot \lambda^{1-n/p}.
\end{align*}
Also observe 
\begin{align*}
\left|\int_{\R^n}xd\mu(x)\right|^{1-n/p}&=\left|\int_{\R^n}(\lambda Oy+z)d\rho(y)\right|^{1-n/p}\\
&=\left|\int_{\R^n}yd\rho(y)\right|^{1-n/p}\cdot \lambda^{1-n/p}.
\end{align*}
Consequently, 
$$
\frac{\displaystyle\left|\int_{\R^n}u(x)d\mu(x)\right|}{\displaystyle\hspace{.2in}\left|\int_{\R^n}xd\mu(x)\right|^{1-n/p}}
\le \frac{\displaystyle\int_{\R^n}|y|^{1-n/p}d|\rho|(y)}{\displaystyle\left|\int_{\R^n}yd\rho(y)\right|^{1-n/p}}\cdot [u]_{1-n/p}=A\cdot [u]_{1-n/p}.
$$

\par $(ii)$  If \eqref{constantCond} holds, then 
$$
\int_{\R^n}\lambda^{n/p-1}u(\lambda Oy+z)d\rho(y)=0
$$
for all $\lambda>0$, $z\in \R^n$ and $O\in \textup{O}(n)$.  In particular,
$$
\int_{\R^n}\frac{u(\lambda Oy+z)-u(z)}{\lambda}d\rho(y)=0
$$
for all $\lambda>0$, $z\in \R^n$ and $O\in \textup{O}(n)$.

\par By Rademacher's Theorem, $u$ is differentiable almost everywhere (Theorem 6.5 in \cite{MR3409135}). Let $z$ be a point in which $Du(z)$ 
exists.  As the support of $\rho$ is compact,  
$$
\lim_{\lambda\rightarrow 0^+}\frac{u(\lambda Oy+z)-u(z)}{\lambda}=Du(z)\cdot Oy
$$
uniformly for each $y\in \text{supp}(\rho)$.  It follows that 
$$
0=\lim_{\lambda\rightarrow 0^+}\int_{\R^n}\frac{u(\lambda Oy+z)-u(z)}{\lambda}d\rho(y)=Du(z)\cdot O\left(\int_{\R^n}yd\rho(y)\right).
$$
As $\int_{\R^n}yd\rho(y)\neq 0$ and $O\in \textup{O}(n)$ is arbitrary, $Du(z)=0$. Since $Du$ vanishes almost everywhere, it must be that $u$ is constant throughout $\R^n$. 
\end{proof}

\par Let us recall the limits 
\be\label{LocalMorreyConsequence}
\displaystyle\lim_{|x-y|\rightarrow 0}\frac{|u(x)-u(y)|}{|x-y|^{1-n/p}}=0
\ee
and
\be\label{MaxHolderRatio}
\displaystyle\lim_{|x|+|y|\rightarrow \infty}\frac{|u(x)-u(y)|}{|x-y|^{1-n/p}}=0,
\ee
which hold for each $u\in {\cal D}^{1,p}(\R^n)$.  These limits were verified in Theorem 6.1 of \cite{HyndSeuf} which 
asserts that for each $u\in {\cal D}^{1,p}(\R^n)$ there are distinct $x_0,y_0$ for which 
$$
[u]_{1-n/p}=\frac{|u(x_0)-u(y_0)|}{|x_0-y_0|^{1-n/p}}.
$$
We will extend this assertion in the following proposition. 
\begin{prop}\label{MaxRhoSemi}
Suppose $u\in{\cal D}^{1,p}(\R^n)$. There is $S\in {\cal S}(n)$ such that 
$$
[u]_\rho=\frac{\displaystyle\left|\int_{\R^n}u(x)d\mu(x)\right|}{\displaystyle\left|\int_{\R^n}xd\mu(x)\right|^{1-n/p}}
$$
for $\mu=S{_\#}\rho$. 
\end{prop}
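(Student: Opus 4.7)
The plan is to show the supremum defining $[u]_\rho$ is attained by taking a maximizing sequence of similarities and extracting a convergent subsequence. If $[u]_\rho = 0$, Proposition \ref{SeminormProp}(ii) implies $u$ is constant, and then every similarity $S$ gives ratio $0$; so I may assume $[u]_\rho > 0$.

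Choose $S_k(y) = \lambda_k O_k y + z_k$ with $R_k := \frac{|\int u\, d\mu_k|}{|\int x\, d\mu_k|^{1-n/p}} \to [u]_\rho$, where $\mu_k = (S_k){_\#}\rho$. Exactly as in the proof of Proposition \ref{SeminormProp}, using $\rho(\R^n)=0$ I rewrite
\[
R_k = \frac{\displaystyle\left|\int_{\R^n} \bigl(u(\lambda_k O_k y + z_k) - u(z_k)\bigr)\, d\rho(y)\right|}{\lambda_k^{1-n/p} \left|\int_{\R^n} y\, d\rho(y)\right|^{1-n/p}},
\]
with the denominator a strictly positive constant by assumption \eqref{rhoAssump}. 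Since $\textup{O}(n)$ is compact, I pass to a subsequence so $O_k \to O$.

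The heart of the argument is to rule out escape of $(\lambda_k, z_k)$ using the H\"older-ratio vanishing limits \eqref{LocalMorreyConsequence} and \eqref{MaxHolderRatio}. If $\lambda_k \to 0$, then $|\lambda_k O_k y + z_k - z_k| = \lambda_k |y| \to 0$ uniformly for $y$ in the compact set $\textup{supp}(\rho)$, and \eqref{LocalMorreyConsequence} forces the integrand divided by $\lambda_k^{1-n/p}$ to vanish uniformly, hence $R_k \to 0$. If instead $\lambda_k \to \infty$, or $\lambda_k$ is bounded away from $0$ and $\infty$ while $|z_k| \to \infty$, then for each $y \neq 0$ in $\textup{supp}(\rho)$ we have $|\lambda_k O_k y + z_k| + |z_k| \to \infty$ by the triangle inequality, so \eqref{MaxHolderRatio} gives
\[
\frac{|u(\lambda_k O_k y + z_k) - u(z_k)|}{\lambda_k^{1-n/p}} = \frac{|u(\lambda_k O_k y + z_k) - u(z_k)|}{|\lambda_k O_k y|^{1-n/p}}\, |y|^{1-n/p} \longrightarrow 0
\]
pointwise. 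This expression is dominated by $[u]_{1-n/p}\, |y|^{1-n/p} \in L^1(|\rho|)$, so dominated convergence again yields $R_k \to 0$. Each possibility contradicts $R_k \to [u]_\rho > 0$.

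Therefore, along a subsequence $\lambda_k \to \lambda > 0$, $z_k \to z$, and $O_k \to O$. Setting $S(y) = \lambda O y + z$, continuity of $u$ together with compactness of $\textup{supp}(\rho)$ gives uniform convergence $u(\lambda_k O_k y + z_k) \to u(\lambda O y + z)$ for $y \in \textup{supp}(\rho)$, so $R_k \to R(S) = [u]_\rho$ and $S$ realizes the supremum. The main obstacle is the non-compactness of the parameter space $(0,\infty) \times \textup{O}(n) \times \R^n$, and it is resolved precisely by the two vanishing limits, which guarantee that every escape scenario drives $R_k$ to zero.
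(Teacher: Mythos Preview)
Your proof is correct and follows essentially the same approach as the paper's: take a maximizing sequence of similarities, rewrite the ratio using $\rho(\R^n)=0$, rule out degeneration of $(\lambda_k,z_k)$ via the two vanishing limits \eqref{LocalMorreyConsequence} and \eqref{MaxHolderRatio} together with dominated convergence (with $[u]_{1-n/p}\,|y|^{1-n/p}$ as the $L^1(|\rho|)$ majorant), and then pass to the limit along a convergent subsequence using compactness of $\textup{O}(n)$. The only cosmetic differences are that you split off the case $\lambda_k\to 0$ explicitly and invoke continuity of $u$ in the final passage to the limit, whereas the paper treats all escape scenarios in one breath.
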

\begin{proof}
We may that assume that $u$ is nonconstant; or else the assertion holds for any $S\in {\cal S}(n)$. In this case, we can choose sequences $(\lambda_k)_{k\in \N}\subset (0,\infty)$, $(O_k)_{k\in \N}\subset \textup{O}(n)$, $(z_k)_{k\in \N}\subset \R^n$ such that
\be
[u]_\rho=\lim_{k\rightarrow\infty}\frac{\displaystyle\left|\int_{\R^n}\lambda_k^{{n/p}-1}u(\lambda_k O_ky+z_k)d\rho(y)\right|}{\displaystyle\left|\int_{\R^n}yd\rho(y)\right|^{1-n/p}}.
\ee
We may also rewrite this limit as 
\be\label{rewritingLimituKay}
\displaystyle\left|\int_{\R^n}yd\rho(y)\right|^{1-n/p}[u]_\rho=\lim_{k\rightarrow\infty}\left|\int_{\R^n}
\frac{u(\lambda_k O_ky+z_k)-u(z_k)}{|(\lambda_k O_ky+z_k)-z_k|^{1-n/p}}|y|^{1-n/p}d\rho(y)\right|.
\ee
In addition, observe that
\be\label{BasicHolderBoundukay}
\frac{|u(\lambda_k O_ky+z_k)-u(z_k)|}{|(\lambda_k O_ky+z_k)-z_k|^{1-n/p}}\le [u]_{1-n/p}.
\ee
for $k\in \N$ and $y\in \R^n$.

\par Suppose any one of the limits hold
\be
\begin{cases}
\liminf_{k\rightarrow\infty}\lambda_k=0,
\\\\
\limsup_{k\rightarrow\infty}\lambda_k=\infty,
\\\\
\limsup_{k\rightarrow\infty}|z_k|=\infty.
\end{cases}
\ee
In view of \eqref{LocalMorreyConsequence} and \eqref{MaxHolderRatio}, there are subsequences $(\lambda_{k_j})_{k\in \N}\subset (0,\infty)$, $(O_{k_j})_{k\in \N}\subset \textup{O}(n)$, $(z_{k_j})_{k\in \N}\subset \R^n$ such that 
$$
\lim_{j\rightarrow\infty}\frac{|u(\lambda_{k_j} O_{k_j}y+z_{k_j})-u(z_{k_j})|}{|(\lambda_{k_j} O_{k_j}y+z_{k_j})-z_{k_j}|^{1-n/p}}=0
$$
for all $y\in \R^n$. We can then combine this limit with \eqref{BasicHolderBoundukay} and apply dominated convergence to get 
$$
\lim_{k\rightarrow\infty}\int_{\R^n}
\frac{u(\lambda_k O_ky+z_k)-u(z_k)}{|(\lambda_k O_ky+z_k)-z_k|^{1-n/p}}|y|^{1-n/p}d\rho(y)=0.
$$
It would then follow from \eqref{rewritingLimituKay} that $u$ is constant. 

\par As a result, there are subsequences $(\lambda_{k_j})_{k\in \N}\subset (0,\infty)$, $(O_{k_j})_{k\in \N}\subset \textup{O}(n)$, $(z_{k_j})_{k\in \N}\subset \R^n$ such that 
$$
\lambda_{k_j}\rightarrow\lambda, \quad O_{k_j}\rightarrow O,\quad\text{and}\quad z_{k_j}\rightarrow z
$$
for some $\lambda>0$, $O\in \textup{O}(n)$, and $z\in \R^n$. Here we are using that $\textup{O}(n)$ is compact.  Then 
we can send $k=k_j\rightarrow\infty$ in \eqref{rewritingLimituKay} to get 
\begin{align*}
\displaystyle\left|\int_{\R^n}yd\rho(y)\right|^{1-n/p}[u]_\rho&=\left|\int_{\R^n}
\frac{u(\lambda Oy+z)-u(z)}{|(\lambda Oy+z)-z|^{1-n/p}}|y|^{1-n/p}d\rho(y)\right|\\
&=\left|\int_{\R^n}\lambda^{{n/p}-1}u(\lambda Oy+z)d\rho(y)\right|.
\end{align*}
We conclude upon setting $S(y)=\lambda Oy+z$, defining $\mu=S{_\#}\rho$, and rearranging this equality.
\end{proof}

%%%%%%%%%%%%%%%%%%%%%%%%%%%%%%%%%%%%%%%%%%%%%%%
\section{Nonconstant extremals}\label{NoncontSect}
In view of Proposition \ref{SeminormProp} and Morrey's inequality \eqref{MorreyIneq}, there is a constant $C$ such that the generalized Morrey inequality \eqref{GeneralizedMorrey} holds.  It will be convenient for us to denote $C_*$ as the smallest constant such that the generalized Morrey inequality holds and state the inequality as
\be\label{UpsMorrey}
[u]_\rho\le C_*\|Du\|_p
\ee
for $u\in {\cal D}^{1,p}(\R^n)$. We will now verify that a nontrivial extremal exists.  Along the way, we will use the fact that $u\mapsto [u]_\rho$ and $u\mapsto \|Du\|_p$ are each invariant under the transformations
\be
\begin{cases}
u(x)\mapsto -u(x)\\\\
u(x)\mapsto u(x)+c\\\\
u(x)\mapsto  \lambda^{n/p-1}u(\lambda Ox+z)
\end{cases}
\ee
for each $c\in \R,z\in \R^n, O\in \textup{O}(n),$ and $\lambda>0$.

\begin{prop}\label{ExistenceExtProp}
There is a nonconstant $u\in {\cal D}^{1,p}(\R^n)$ with 
$$
[u]_\rho= C_*\|Du\|_p.
$$
\end{prop}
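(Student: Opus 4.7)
The plan is to apply the direct method of the calculus of variations, using the invariances of both $[u]_\rho$ and $\|Du\|_p$ together with Proposition \ref{MaxRhoSemi} to compensate for the translation and dilation non-compactness of the problem. First, I take a maximizing sequence $(u_k)\subset{\cal D}^{1,p}(\R^n)$ normalized so that $\|Du_k\|_p=1$ and $[u_k]_\rho\to C_*$. Note that $C_*>0$, since Proposition \ref{SeminormProp}(ii) forces $[v]_\rho>0$ for any nonconstant test function $v\in{\cal D}^{1,p}(\R^n)$.

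Next, for each $k$ I invoke Proposition \ref{MaxRhoSemi} to pick $S_k(y)=\lambda_k O_ky+z_k\in{\cal S}(n)$ attaining the supremum in $[u_k]_\rho$ at $\mu_k=(S_k)_\#\rho$, and I define the rescaled function
$$
\tilde u_k(x):=\lambda_k^{n/p-1}\bigl(u_k(\lambda_kO_kx+z_k)-u_k(z_k)\bigr).
$$
By the listed invariances, $\|D\tilde u_k\|_p=1$ and $[\tilde u_k]_\rho=[u_k]_\rho\to C_*$. A routine change of variables (exploiting $\rho(\R^n)=0$ to absorb the additive constant) shows that the identity similarity attains the supremum in $[\tilde u_k]_\rho$, i.e.
$$
[\tilde u_k]_\rho=\frac{\bigl|\int_{\R^n}\tilde u_k\,d\rho\bigr|}{\bigl|\int_{\R^n}y\,d\rho(y)\bigr|^{1-n/p}},
$$
and by construction $\tilde u_k(0)=0$.

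Now Morrey's inequality \eqref{MorreyIneq} gives $[\tilde u_k]_{1-n/p}\le C\|D\tilde u_k\|_p=C$, so $(\tilde u_k)$ is uniformly $(1-n/p)$-H\"older and, since $\tilde u_k(0)=0$, uniformly bounded on compact sets. Arzel\`a-Ascoli then produces a subsequence converging locally uniformly to some $u\in C^{0,1-n/p}(\R^n)$, while weak compactness in $L^p$ yields (along a further subsequence) $D\tilde u_k\rightharpoonup Du$ with $\|Du\|_p\le 1$ by lower semicontinuity. Because $\mathrm{supp}(\rho)$ is compact, local uniform convergence forces $\int_{\R^n}\tilde u_k\,d\rho\to\int_{\R^n}u\,d\rho$, so
$$
[u]_\rho\ge\frac{\bigl|\int_{\R^n}u\,d\rho\bigr|}{\bigl|\int_{\R^n}y\,d\rho(y)\bigr|^{1-n/p}}=\lim_{k\to\infty}[\tilde u_k]_\rho=C_*.
$$
Combined with $[u]_\rho\le C_*\|Du\|_p\le C_*$, this yields $[u]_\rho=C_*=C_*\|Du\|_p$, so $u$ is an extremal; and $u$ must be nonconstant because $C_*>0$.

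The main obstacle is exactly the translation and dilation non-compactness of $\,{\cal D}^{1,p}(\R^n)$: without renormalizing, a maximizing sequence may concentrate, spread out, or drift off to infinity, and its weak limit would then test trivially against $\rho$. Proposition \ref{MaxRhoSemi} is the key tool that resolves this, because the distinguished similarity $S_k$ it provides can be undone via the invariances of the problem, effectively pinning the sequence to the origin in such a way that the compactness of $\mathrm{supp}(\rho)$ can be used to pass to the limit inside $\int u\,d\rho$.
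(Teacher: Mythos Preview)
Your proof is correct and follows essentially the same approach as the paper's own proof: invoke Proposition~\ref{MaxRhoSemi} to renormalize each term of an optimizing sequence so that the supremum is attained at the identity similarity, anchor the sequence at the origin, and then pass to the limit via Arzel\`a--Ascoli for the functions and weak $L^p$ compactness for the gradients. The only cosmetic difference is that the paper minimizes $\|Du\|_p$ subject to $[u]_\rho=1$, whereas you maximize $[u]_\rho$ subject to $\|Du\|_p=1$; these are equivalent formulations and the arguments line up step for step.
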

\begin{proof}
Set 
$$
\Lambda:=\inf\left\{\|Du\|_p:[u]_\rho=1 \right\}
$$
and choose a sequence $(u_k)_{k\in \N}\subset  {\cal D}^{1,p}(\R^n)$ with
$$
\Lambda:=\lim_{k\rightarrow\infty}\|Du_k\|_p
$$
and $[u_k]_\rho=1$ for each $k\in \N$. By Proposition \ref{MaxRhoSemi}, we may also select $\lambda^k>0, O^k\in \text{O}(n), z^k\in \R^n$ 
\be
1=[u_k]_{\rho}=\frac{\displaystyle\left|\int_{\R^n}(\lambda^k)^{n/p-1}u_k(\lambda^k O^ky+z^k)d\rho(y)\right|}{\displaystyle\hspace{.2in}\left|\int_{\R^n}yd\rho(y)\right|^{1-n/p}}
\ee 
for each $k\in \N$.  

\par Define
$$
v_k(y):=(\lambda^k)^{n/p-1}\left\{u_k(\lambda^kO^ky+z^k)-u_k(z^k)\right\}, \quad y\in \R^n.
$$
It follows from the definition of $v_k$ and the invariances of the seminorms $u\mapsto [u]_\rho$ and $u\mapsto \|Du\|_p$ that 
\be
\begin{cases}
v_k(0)=0\\\\
[v_k]_\rho=1\\\\
\frac{\displaystyle\left|\int_{\R^n}v_k(y)d\rho(y)\right|}{\displaystyle\hspace{.2in}\left|\int_{\R^n}yd\rho(y)\right|^{1-n/p}}= 1\\\\
\Lambda=\lim_{k\rightarrow\infty}\|Dv_k\|_p.
\end{cases}
\ee
In view of Morrey's inequality, we have that $(v_k)_{k\in\N}$ is equicontinuous. Since $v_k(0)=0$, this sequence is pointwise uniformly bounded on compact subsets of $\R^n$.  By the Arzel\`a-Ascoli Theorem, there is a subsequence  $(v_{k_j})_{j\in\N}$
and $v:\R^n\rightarrow \R$ such that $v_{k_j}\rightarrow v$ locally uniformly on $\R^n$.  

\par It follows that 
$$
v(0)=0,\quad [v]_{\rho}\le 1,\quad \text{and}\quad \frac{\displaystyle\left|\int_{\R^n}v(y)d\rho(y)\right|}{\displaystyle\hspace{.2in}\left|\int_{\R^n}yd\rho(y)\right|^{1-n/p}}= 1.
$$
In particular, 
$$
 [v]_{\rho}=\frac{\displaystyle\left|\int_{\R^n}v(y)d\rho(y)\right|}{\displaystyle\hspace{.2in}\left|\int_{\R^n}yd\rho(y)\right|^{1-n/p}}= 1,
$$
so $v$ is nonconstant.

\par As $\|Dv_{k_j}\|_p$ is bounded, $Dv_{k_j}$ has a weakly convergent subsequence in $L^p(\R^n; \R^n)$; it is routine to check that $Dv_{k_j}\rightharpoonup Dv$ in $L^p(\R^n; \R^n)$. As a result, $v\in {\cal D}^{1,p}(\R^n)$ and 
$$
\Lambda=\lim_{j\rightarrow\infty}\|Dv_{k_j}\|_p\ge \|Dv\|_p\ge \Lambda.
$$ 
Consequently, for a given $u\in {\cal D}^{1,p}(\R^n)$ which is nonconstant 
$$
\Lambda=\|Dv\|_p\le \frac{\|Du\|_p}{ [u]_\rho}.
$$
Thus, 
$$
 [u]_\rho\le \frac{1}{\Lambda}\|Du\|_p
$$
and equality holds for $v$. It follows that $v$ is the desired nonconstant extremal and $C_*=1/\Lambda$. 
\end{proof}
\begin{cor}\label{ExistCor}
Suppose $S\in {\cal S}(n)$ and set $\mu=S{_\#}\rho$. There is an extremal of \eqref{UpsMorrey} with 
$$
[u]_\rho=\frac{\displaystyle\left|\int_{\R^n}u(x)d\mu(x)\right|}{\displaystyle\left|\int_{\R^n}xd\mu(x)\right|^{1-n/p}}.
$$
\end{cor}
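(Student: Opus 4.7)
The plan is to combine the existence result (Proposition \ref{ExistenceExtProp}) with Proposition \ref{MaxRhoSemi} and exploit the invariance of both $[\cdot]_\rho$ and $\|D\cdot\|_p$ under the similarity transformations listed just before Proposition \ref{ExistenceExtProp}. Let $v$ be the nonconstant extremal produced in Proposition \ref{ExistenceExtProp}, so $[v]_\rho = C_* \|Dv\|_p$. By Proposition \ref{MaxRhoSemi}, there exists some $T \in {\cal S}(n)$ realizing the supremum defining $[v]_\rho$; that is,
$$
[v]_\rho = \frac{\left|\int_{\R^n} v(x)\, d(T{_\#}\rho)(x)\right|}{\left|\int_{\R^n} x\, d(T{_\#}\rho)(x)\right|^{1-n/p}}.
$$
The point is that Proposition \ref{MaxRhoSemi} gives a specific but essentially arbitrary similarity $T$, whereas the corollary asks for one extremal per prescribed $S$.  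I would bridge this gap by transporting $v$ via an appropriate similarity.

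Given the target $S \in {\cal S}(n)$, set $R := T \circ S^{-1}$, which again belongs to ${\cal S}(n)$ since similarities form a group, and write $R(x) = \lambda O x + z$ for some $\lambda > 0$, $O \in \textup{O}(n)$, and $z \in \R^n$.  Define
$$
u(x) := \lambda^{n/p-1}\, v(\lambda O x + z) = \lambda^{n/p-1}\, v(R(x)).
$$
By the displayed invariance under $u(x) \mapsto \lambda^{n/p-1} u(\lambda O x + z)$, we have $[u]_\rho = [v]_\rho$ and $\|Du\|_p = \|Dv\|_p$, so $u$ is still an extremal of \eqref{UpsMorrey}.

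It remains to check that the seminorm ratio at $\mu = S{_\#}\rho$ equals $[u]_\rho$. By the change of variables formula and the identity $R\circ S = T$,
$$
\int_{\R^n} u(x)\, d\mu(x) = \int_{\R^n} u(S(y))\, d\rho(y) = \lambda^{n/p-1}\int_{\R^n} v(T(y))\, d\rho(y) = \lambda^{n/p-1} \int_{\R^n} v(x)\, d(T{_\#}\rho)(x).
$$
Similarly, since $\rho(\R^n) = 0$ kills the translation part of each similarity,
$$
\int_{\R^n} x\, d(T{_\#}\rho)(x) = \int_{\R^n} R(S(y))\, d\rho(y) = \lambda O \int_{\R^n} S(y)\, d\rho(y) = \lambda O \int_{\R^n} x\, d\mu(x),
$$
so $|\int x\, d(T{_\#}\rho)|^{1-n/p} = \lambda^{1-n/p}|\int x\, d\mu|^{1-n/p}$.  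The factors $\lambda^{n/p-1}$ and $\lambda^{-(1-n/p)}$ then cancel, and the ratio at $\mu$ reduces to the ratio at $T{_\#}\rho$, which equals $[v]_\rho = [u]_\rho$.  There is no genuine obstacle; the argument is an invariance bookkeeping exercise, and the only thing to get right is the composition $R = T \circ S^{-1}$ together with the scaling weight $\lambda^{n/p-1}$.
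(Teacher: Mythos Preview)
Your proof is correct and follows essentially the same transport-by-similarity approach as the paper. The only difference is that the paper exploits the specific extremal $v$ constructed in the proof of Proposition~\ref{ExistenceExtProp}, which already achieves its seminorm at the identity similarity (so in your notation $T=\mathrm{id}$ and $R=S^{-1}$), whereas you invoke Proposition~\ref{MaxRhoSemi} to obtain a general $T$; your version is thus slightly more self-contained at the cost of one extra composition.
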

\begin{proof}
Assume $S(y)=\lambda Oy+z$, for some $\lambda>0$, $z\in \R^n$ and $O\in \textup{O}(n)$.  Let $v$ be the extremal constructed in the proof of Proposition \ref{ExistenceExtProp} and define  
$$
u(x):=\lambda^{1-n/p}v\left(O^{-1}\frac{x-z}{\lambda}\right),\quad x\in \R^n.
$$
Then 
$$
[u]_\rho=[v]_\rho= \frac{\displaystyle\left|\int_{\R^n}v(y)d\rho(y)\right|}{\displaystyle\hspace{.2in}\left|\int_{\R^n}yd\rho(y)\right|^{1-n/p}}
$$
and $\|Du\|_p=\|Dv\|_p$, so $u$ is an extremal. Moreover, 
$$
\frac{\displaystyle\left|\int_{\R^n}v(y)d\rho(y)\right|}{\displaystyle\hspace{.2in}\left|\int_{\R^n}yd\rho(y)\right|^{1-n/p}}=\frac{\displaystyle\left|\int_{\R^n}\lambda^{n/p-1}u(\lambda Oy+z)d\rho(y)\right|}{\displaystyle\hspace{.2in}\left|\int_{\R^n}yd\rho(y)\right|^{1-n/p}}=\frac{\displaystyle\left|\int_{\R^n}u(x)d\mu(x)\right|}{\displaystyle\left|\int_{\R^n}xd\mu(x)\right|^{1-n/p}}.
$$ 
\end{proof}

%%%%%%%%%%%%%%%%%%%%%%%%%%%%%%%%%%%%%%%%%%%%%%%
\section{Equivalence theorem}\label{EquivSect}
In this section, we will argue that extremals of the generalized Morrey inequality \eqref{UpsMorrey} are uniquely determined up to similarity transformations, are uniformly bounded, are asymptotically flat, and inherit symmetry and antisymmetry properties of $\rho$.  These features are all consequences of Theorem \ref{GenEquivalent}, so we will start by proving this theorem. 

\begin{proof}[Proof of Theorem \ref{GenEquivalent}]
$(i)\Longrightarrow (ii)$ By assumption,
\be\label{FirstPDEid}
\frac{\displaystyle\left|\int_{\R^n}u(x)d\mu(x)\right|^p}{\hspace{.2in}\displaystyle\left|\int_{\R^n}xd\mu(x)\right|^{p-n}}
= C_*^p\int_{\R^n}|Du|^pdx.
\ee
For $t\in \R$ and $v\in {\cal  D}^{1,p}(\R^n)$, we also have 
\be\label{SecondPDEid}
\frac{\displaystyle\left|\int_{\R^n}u(x)d\mu(x)+t\int_{\R^n}v(x)d\mu(x)\right|^p}{\hspace{.2in}\displaystyle\left|\int_{\R^n}xd\mu(x)\right|^{p-n}}\le C_*^p\int_{\R^n}|Du+tDv|^pdx.
\ee
Subtracting \eqref{FirstPDEid} from \eqref{SecondPDEid}, dividing by $t>0$, and sending $t\rightarrow 0$ gives 
$$
\frac{\displaystyle\left|\int_{\R^n}u(x)d\mu(x)\right|^{p-2}\int_{\R^n}u(x)d\mu(x)\int_{\R^n}v(x)d\mu(x)}{\hspace{.2in}\displaystyle\left|\int_{\R^n}xd\mu(x)\right|^{p-n}}\le C_*^p\int_{\R^n}|Du|^{p-2}Du\cdot Dvdx.
$$
\par Replacing $v$ with $-v$ gives 
$$
\int_{\R^n}|Du|^{p-2}Du\cdot Dvdx=c\int_{\R^n}v(x)d\mu(x)
$$
where
$$
c=\frac{\displaystyle\left|\int_{\R^n}u(x)d\mu(x)\right|^{p-2}\int_{\R^n}u(x)d\mu(x)}{\hspace{.1in}C_*^p\displaystyle\left|\int_{\R^n}xd\mu(x)\right|^{p-n}}.
$$
That is, 
$$
-\Delta_pu=c\mu
$$
in $\R^n$. 

\par $(ii)\Longrightarrow (iii)$ Suppose $v\in {\cal  D}^{1,p}(\R^n)$ with $\displaystyle\int_{\R^n}v(x)d\mu(x)=\int_{\R^n}u(x)d\mu(x).$ Then 
\begin{align*}
 \int_{\R^n}|Dv|^pdx&\ge \int_{\R^n}|Du|^pdx+p\int_{\R^n}|Du|^{p-2}Du\cdot (Dv-Du)dx\\
 &=\int_{\R^n}|Du|^pdx+pc\int_{\R^n}(v(x)-u(x))d\mu(x)\\
 &=\int_{\R^n}|Du|^pdx.
\end{align*}

\par $(iii)\Longrightarrow (i)$ Suppose $w\in {\cal D}^{1,p}(\R^n)$ is an extremal of \eqref{UpsMorrey} with 
$$
[w]_\rho=\frac{\displaystyle\left|\int_{\R^n}w(x)d\mu(x)\right|}{\displaystyle\left|\int_{\R^n}xd\mu(x)\right|^{1-n/p}}.
$$
Such a $w$ exists by Corollary \ref{ExistCor}.  Multiplying $w$ by an appropriate scalar, we may assume that  
$$
\int_{\R^n}w(x)d\mu(x)=\int_{\R^n}u(x)d\mu(x).
$$
In particular, 
$$
[w]_\rho\le [u]_\rho.
$$
\par By assumption,
$$
\|Du\|_p\le \|Dw\|_p.
$$
As
$$
[w]_\rho\le [u]_\rho\le C_*\|Du\|_p\le C_*\|Dw\|_p=[w]_\rho,
$$
$u$ is an extremal and 
$$
[u]_\rho= [w]_\rho=\frac{\displaystyle\left|\int_{\R^n}w(x)d\mu(x)\right|}{\displaystyle\left|\int_{\R^n}xd\mu(x)\right|^{1-n/p}}=\frac{\displaystyle\left|\int_{\R^n}u(x)d\mu(x)\right|}{\displaystyle\left|\int_{\R^n}xd\mu(x)\right|^{1-n/p}}.
$$
\end{proof}

%-----------------------------------------------------------------------
\subsection{Uniqueness}
We will now explain that any two extremals are uniquely determined up to a similarity transformation and a few constants. 

\begin{prop}\label{UniqueProp}
Suppose $S\in{\cal S}(n)$ and set $\mu=S{_\#}\rho$.  If $u, v\in {\cal D}^{1,p}(\R^n)$ are extremals of \eqref{UpsMorrey} with 
$$
[u]_\rho=\frac{\displaystyle\int_{\R^n}u(x)d\mu(x)}{\displaystyle\left|\int_{\R^n}xd\mu(x)\right|^{1-n/p}}>0
\quad \text{and}\quad [v]_\rho=\frac{\displaystyle\int_{\R^n}v(y)d\rho(y)}{\displaystyle\left|\int_{\R^n}yd\rho(y)\right|^{1-n/p}}>0,
$$
then 
\be\label{uniqueEqn}
v(y)=\frac{\displaystyle\int_{\R^n}vd\rho}{\displaystyle\int_{\R^n}ud\mu}\left(u(S(y))-u(S(0))\right)+v(0)
\ee
for all $y\in \R^n$.
\end{prop}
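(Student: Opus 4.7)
The plan is to combine the invariances of $[\cdot]_\rho$ and $\|D\cdot\|_p$ with the variational characterization in Theorem~\ref{GenEquivalent}(iii), and then exploit strict convexity of $\xi\mapsto|\xi|^p$ on $\R^n$ (valid for $p>1$) to upgrade equality of minimum values into equality of gradients almost everywhere.

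First, I would normalize $u$ by pulling it back through $S$. Writing $S(y)=\lambda Oy+z$ and setting
\[
\tilde u(y):=\lambda^{n/p-1}u(S(y)),
\]
the invariances of both seminorms imply that $\tilde u$ is again an extremal of~\eqref{UpsMorrey}. A direct push-forward computation, using $\rho(\R^n)=0$, yields
\[
\int_{\R^n}\tilde u\,d\rho=\lambda^{n/p-1}\int_{\R^n}u\,d\mu\quad\text{and}\quad\Big|\int_{\R^n}x\,d\mu\Big|=\lambda\Big|\int_{\R^n}y\,d\rho\Big|,
\]
so $\tilde u$ realizes its seminorm at the identity similarity (i.e., at $\rho$ itself). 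Setting
\[
\alpha:=\frac{\int_{\R^n}v\,d\rho}{\int_{\R^n}\tilde u\,d\rho},
\]
which is finite and nonzero by the positivity hypotheses, the scaled extremal $\alpha\tilde u$ still attains its seminorm at $\rho$ and satisfies $\int_{\R^n}\alpha\tilde u\,d\rho=\int_{\R^n}v\,d\rho$.

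By Theorem~\ref{GenEquivalent}(iii) applied to both $v$ and $\alpha\tilde u$, each minimizes $w\mapsto\|Dw\|_p$ over the affine class $\mathcal{A}:=\{w\in\mathcal{D}^{1,p}(\R^n):\int_{\R^n}w\,d\rho=\int_{\R^n}v\,d\rho\}$. Since $\tfrac12(v+\alpha\tilde u)$ also lies in $\mathcal{A}$, strict convexity of $\xi\mapsto|\xi|^p$ forces $Dv=\alpha D\tilde u$ almost everywhere; otherwise $\tfrac12(v+\alpha\tilde u)$ would deliver a strictly smaller value of $\|Dw\|_p^p$ than the common minimum. Because $v$ and $\alpha\tilde u$ admit H\"older continuous representatives (by Morrey), $v-\alpha\tilde u$ is a constant on all of $\R^n$; evaluating at $y=0$ and using
\[
\alpha\lambda^{n/p-1}=\frac{\int_{\R^n}v\,d\rho}{\int_{\R^n}u\,d\mu}
\]
then gives~\eqref{uniqueEqn}. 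The main (and essentially only) nontrivial step is the strict convexity argument; the rest is bookkeeping on how the similarity $S$ transports the normalization of the seminorm from $\mu$ back to $\rho$ so that $\alpha\tilde u$ lands in the same affine class as $v$.
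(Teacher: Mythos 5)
Your proof is correct and follows essentially the same reduction as the paper: pull $u$ back through $S$ and rescale so that both extremals lie in a single affine constraint class $\{w:\int w\,d\rho=\int v\,d\rho\}$, then show their gradients agree a.e.\ and conclude the difference is constant. The only real difference is the final step: the paper invokes characterization (ii) of Theorem~\ref{GenEquivalent}, subtracts the two PDEs and tests against $\tilde v-\tilde w$ to exploit the strict monotonicity of $z\mapsto|z|^{p-2}z$, whereas you invoke characterization (iii) and use strict convexity of $\|\cdot\|_p$ (equivalently of $\xi\mapsto|\xi|^p$) together with the midpoint trick to force $Dv=\alpha D\tilde u$ a.e. These are two faces of the same coin---the monotone operator $|z|^{p-2}z$ is precisely the gradient of the strictly convex integrand $\tfrac1p|\xi|^p$---so the choice is largely a matter of taste; the paper's version gives the PDE $-\Delta_p(\cdot)=c\rho$ explicitly en route, while yours stays entirely on the variational side and avoids mentioning the constant $c$ at all.
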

\begin{proof}
Suppose $S(y)=\lambda Oy+z$, where $\lambda>0$, $z\in \R^n$ and $O\in \textup{O}(n)$. 
Set $w(y)=\lambda^{n/p-1}u(\lambda Oy+z)$, and observe that $w$ is an extremal of \eqref{UpsMorrey} which satisfies 
$$
[w]_{\rho}=[u]_\rho=\frac{\displaystyle\int_{\R^n}w(y)d\rho(y)}{\displaystyle\left|\int_{\R^n}yd\rho(y)\right|^{1-n/p}}.
$$
By the proof of Theorem \ref{GenEquivalent}, $w$ satisfies the PDE
$$
-\Delta_pw=\frac{\displaystyle\left(\int_{\R^n}wd\rho\right)^{p-1}}{\displaystyle\left|\int_{\R^n}yd\rho(y)\right|^{p-n}}\rho
$$
in $\R^n$.  We also have 
$$
-\Delta_pv=\frac{\displaystyle\left(\int_{\R^n}vd\rho\right)^{p-1}}{\displaystyle\left|\int_{\R^n}yd\rho(y)\right|^{p-n}}\rho
$$
in $\R^n$. 

\par Let us now define 
$$
\tilde v(y)=\frac{v(y)}{\displaystyle\int_{\R^n}vd\rho}\quad \text{and}\quad \tilde w(y)=\frac{w(y)}{\displaystyle\int_{\R^n}wd\rho}
$$
and note that since $\tilde v,\tilde w\in {\cal D}^{1,p}(\R^n)$ satisfy the same PDE:
$$
\int_{\R^n}|D\tilde v|^{p-2}D\tilde v\cdot D\phi dx=\frac{1}{\displaystyle\left|\int_{\R^n}yd\rho(y)\right|^{p-n}}\int_{\R^n}\phi d\rho=\int_{\R^n}|D\tilde w|^{p-2}D\tilde w\cdot D\phi dx
$$
for all $\phi\in {\cal D}^{1,p}(\R^n)$. Choosing $\phi=\tilde v-\tilde w$ gives 
$$
\int_{\R^n}(|D\tilde v|^{p-2}D\tilde v-|D\tilde w|^{p-2}D\tilde w)\cdot (D\tilde v-D\tilde w)dx=0.
$$
As $\R^n\ni z\mapsto |z|^{p-2}z$ is strictly monotone, $\tilde v-\tilde w$ is constant throughout $\R^n$. 

\par It follows that 
$$
\frac{v(y)}{\displaystyle\int_{\R^n}vd\rho}-\frac{w(y)}{\displaystyle\int_{\R^n}wd\rho}=\frac{v(0)}{\displaystyle\int_{\R^n}vd\rho}-\frac{w(0)}{\displaystyle\int_{\R^n}wd\rho}
$$
for all $y\in \R^n$. Moreover,
\begin{align*}
v(y)&=\frac{\displaystyle\int_{\R^n}vd\rho}{\displaystyle\int_{\R^n}wd\rho}(w(y)-w(0))+v(0)\\
&=\frac{\displaystyle\int_{\R^n}vd\rho}{\displaystyle\int_{\R^n}ud\mu}\left(u(\lambda Oy+z)-u(z)\right)+v(0).
\end{align*}
\end{proof}
\begin{cor}\label{UniqueCor}
Suppose $S\in {\cal S}(n)$ and set $\mu=S{_\#}\rho$.  Assume $u_1, u_2\in {\cal D}^{1,p}(\R^n)$ are extremals of \eqref{UpsMorrey} with 
$$
[u_1]_\rho=\frac{\displaystyle\left|\int_{\R^n}u_1(x)d\mu(x)\right|}{\displaystyle\left|\int_{\R^n}xd\mu(x)\right|^{1-n/p}},
\quad
[u_2]_\rho=\frac{\displaystyle\left|\int_{\R^n}u_2(x)d\mu(x)\right|}{\displaystyle\left|\int_{\R^n}xd\mu(x)\right|^{1-n/p}},
$$
and 
\be\label{youoneyoutwoIntegralsEqual}
\int_{\R^n}u_1(x)d\mu(x)=\int_{\R^n}u_2(x)d\mu(x).
\ee
Then $u_1-u_2$ is constant throughout $\R^n$.
\end{cor}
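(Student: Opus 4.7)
The plan is to reduce to Proposition \ref{UniqueProp} by introducing an auxiliary extremal associated with $\rho$ (rather than with $\mu$) and using it as a bridge between $u_1$ and $u_2$.

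First I will dispose of the degenerate case. If $\int u_i(x)\, d\mu(x) = 0$ for $i=1$ (hence also for $i=2$, by \eqref{youoneyoutwoIntegralsEqual}), then the hypothesis forces $[u_i]_\rho = 0$, so by Proposition \ref{SeminormProp}(ii) each $u_i$ is constant throughout $\R^n$, and $u_1 - u_2$ is trivially constant. Otherwise $\int u_i\, d\mu \neq 0$ for both $i$; and since $u \mapsto -u$ leaves $[u]_\rho$ invariant and preserves the equality \eqref{youoneyoutwoIntegralsEqual} when applied to both $u_1$ and $u_2$, I may assume without loss of generality that $\int u_i\, d\mu > 0$ for $i=1,2$. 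Then the representations in the hypothesis take the positive form required by Proposition \ref{UniqueProp}.

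Next I will invoke Corollary \ref{ExistCor} with the identity similarity $S = \textup{id}$ (so $\mu = \rho$) to produce an extremal $w \in \mathcal{D}^{1,p}(\R^n)$ with $[w]_\rho = |\int w\, d\rho|/|\int y\, d\rho|^{1-n/p}$; again I may assume $\int w\, d\rho > 0$ after possibly replacing $w$ by $-w$. With this auxiliary extremal in hand, I apply Proposition \ref{UniqueProp} twice: once to the pair $(u, v) = (u_1, w)$, and once to $(u, v) = (u_2, w)$. The first application yields
\begin{equation*}
w(y) = \frac{\int w\, d\rho}{\int u_1\, d\mu}\bigl(u_1(S(y)) - u_1(S(0))\bigr) + w(0),
\end{equation*}
and the second yields the analogous identity with $u_1$ replaced by $u_2$.

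Finally, I will use the hypothesis $\int u_1\, d\mu = \int u_2\, d\mu$ to conclude that the two prefactors coincide, so subtracting the two identities gives
\begin{equation*}
u_1(S(y)) - u_1(S(0)) = u_2(S(y)) - u_2(S(0)) \quad \text{for all } y \in \R^n.
\end{equation*}
Since $S$ is a bijection of $\R^n$, setting $x = S(y)$ yields $u_1(x) - u_2(x) = u_1(S(0)) - u_2(S(0))$ for every $x \in \R^n$, proving the claim. There is no substantial obstacle here beyond the bookkeeping of sign conventions needed to meet the strict-positivity hypothesis of Proposition \ref{UniqueProp}; the argument is otherwise a direct chaining of that proposition through the auxiliary extremal $w$.
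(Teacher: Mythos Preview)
Your proof is correct and follows essentially the same approach as the paper: introduce an auxiliary extremal associated with $\rho$ itself, apply Proposition \ref{UniqueProp} twice (once for each $u_i$), and use \eqref{youoneyoutwoIntegralsEqual} to equate the resulting expressions. The only difference is that you explicitly dispose of the degenerate case $\int u_i\,d\mu=0$ via Proposition \ref{SeminormProp}(ii), whereas the paper absorbs this into its ``without loss of generality'' reduction.
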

\begin{proof}
Without loss of generality, we may suppose $\displaystyle\int_{\R^n}u_1(x)d\mu(x)>0$. Let $v$ be an extremal of \eqref{UpsMorrey} with 
$$
[v]_\rho=\frac{\displaystyle\int_{\R^n}v(y)d\rho(y)}{\displaystyle\left|\int_{\R^n}yd\rho(y)\right|^{1-n/p}}>0.
$$
In view of \eqref{uniqueEqn}, we have
\begin{align*}
\frac{v(y)}{\displaystyle\int_{\R^n}vd\rho}-\frac{v(0)}{\displaystyle\int_{\R^n}vd\rho}&=\frac{u_1(S(y))-u_1(S(0))}{\displaystyle\int_{\R^n}u_1d\mu}\\
&=\frac{u_2(S(y))-u_2(S(0))}{\displaystyle\int_{\R^n}u_2d\mu}.
\end{align*}
for all $y\in\R^n$. By assumption \eqref{youoneyoutwoIntegralsEqual}, 
$$
u_1(S(y))-u_2(S(y))=u_1(S(0))-u_2(S(0))
$$
for all $y\in\R^n$. It follows that , $u_1(x)-u_2(x)=u_1(S(0))-u_2(S(0))$ for all $x\in \R^n$.
\end{proof}

%-----------------------------------------------------------------------
\subsection{Pointwise bounds and asymptotic flatness}
In the following proposition, we will use the fact that each $u\in{\cal D}^{1,p}(\R^n)$ is 
continuous and that the support of $\rho$ is compact.  Moreover, the support of $S{_\#}\rho$ is compact for any $S\in {\cal S}(n)$. 
\begin{prop}\label{PointwiseBoundProp}
Suppose $S\in {\cal S}(n)$ and set $\mu=S{_\#}\rho$.  Further suppose
$u$ is an extremal of \eqref{UpsMorrey} with 
$$
[u]_\rho=\frac{\displaystyle\left|\int_{\R^n}u(x)d\mu(x)\right|}{\displaystyle\left|\int_{\R^n}xd\mu(x)\right|^{1-n/p}}.
$$
Then 
$$
\inf_{\R^n}u=\min_{\textup{supp}(\mu)}u\quad \text{and}\quad \sup_{\R^n}u=\max_{\textup{supp}(\mu)}u.
$$
\end{prop}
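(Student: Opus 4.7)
The plan is to use characterization $(iii)$ of Theorem \ref{GenEquivalent} together with a simple truncation argument. Let $M := \max_{\textup{supp}(\mu)}u$ and $m := \min_{\textup{supp}(\mu)}u$, both of which are attained since $u$ is continuous and $\textup{supp}(\mu)$ is compact. I will prove the upper bound $\sup_{\R^n}u = M$; the argument for the infimum is entirely symmetric, taking $w := \max(u,m)$ in place of the truncation below.

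The key construction is the truncation $v := \min(u,M)$. Since $u \in \mathcal{D}^{1,p}(\R^n)$ is continuous, $v$ is continuous, lies in $L^1_{\text{loc}}(\R^n)$, and satisfies $Dv = Du\,\chi_{\{u < M\}}$ a.e., so $|Dv| \le |Du|$ pointwise a.e. and $v \in \mathcal{D}^{1,p}(\R^n)$. Crucially, on $\textup{supp}(\mu)$ we have $u \le M$, hence $v = u$ on $\textup{supp}(\mu)$, so
$$
\int_{\R^n}v(x)\,d\mu(x) = \int_{\R^n}u(x)\,d\mu(x).
$$
The hypothesis on $u$ is exactly condition $(i)$ of Theorem \ref{GenEquivalent}, so condition $(iii)$ applies and yields $\|Du\|_p \le \|Dv\|_p$. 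Combined with $|Dv| \le |Du|$, this forces
$$
\int_{\{u \ge M\}}|Du|^p\,dx = 0,
$$
so $Du = 0$ almost everywhere on $\{u \ge M\}$.

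It remains to show $\{u > M\} = \emptyset$. The set $A := \{u > M\}$ is open by continuity of $u$, and $Du = 0$ a.e.\ on $A$. Since $u$ is $1-n/p$ H\"older continuous and $Du = 0$ a.e.\ on each connected component of $A$, the standard absolute continuity on lines characterization of Sobolev functions shows that $u$ is constant on each connected component of $A$, with constant value strictly exceeding $M$. On the other hand, $\textup{supp}(\mu) \not\subset A$ since $u \le M$ there, so $A \ne \R^n$, and each component has nonempty boundary in $\R^n$. At any such boundary point $x_0 \in \partial A$, continuity forces $u(x_0) = M$, contradicting that $u$ equals a constant strictly greater than $M$ on the component adjacent to $x_0$. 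Hence $A = \emptyset$ and $\sup_{\R^n}u = M = \max_{\textup{supp}(\mu)}u$.

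The main subtlety is the final continuity argument: one must rule out the possibility that $\{u > M\}$ has a connected component that is all of $\R^n$ or otherwise has trivial boundary. This is handled by noting $\textup{supp}(\mu)$ provides points where $u \le M$, so $A$ is a proper open subset of $\R^n$ and its boundary is nonempty. The variational/truncation mechanism is standard in the $p$-Laplace literature, and here it is made especially clean by the availability of property $(iii)$ from Theorem \ref{GenEquivalent}.
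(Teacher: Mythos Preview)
Your proof is correct and follows essentially the same truncation strategy as the paper: define $w=\min(u,M)$, observe $w=u$ on $\textup{supp}(\mu)$, and use part $(iii)$ of Theorem~\ref{GenEquivalent} to force $\|Du\|_p=\|Dw\|_p$. The only difference lies in the final step. The paper, having established that $w$ is itself an extremal with the same $\mu$-integral as $u$, invokes Corollary~\ref{UniqueCor} to conclude that $u-w$ is constant, hence $u\equiv w\le M$. You instead argue directly from $Du=0$ a.e.\ on $\{u>M\}$ via a connectedness and continuity argument. Your route is more elementary in that it avoids appealing to the uniqueness machinery (which in turn rests on the PDE characterization and strict monotonicity of the $p$-Laplacian), while the paper's route is shorter once that machinery is in hand.
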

\begin{proof}
We will show $\sup_{\R^n}u=\max_{\textup{supp}(\mu)}u=:M$. To this end, define
$$
w(x):=\min\left\{u(x),M\right\}, \quad x\in \R^n.
$$
It is routine to check that $w\in {\cal D}^{1,p}(\R^n)$, and it is plain to see that $w(x)=u(x)$ for $x\in \textup{supp}(\mu)$.  In particular, 
$$
\int_{\R^n}w(x)d\mu(x)=\int_{\R^n}u(x)d\mu(x).
$$
Note that this implies 
\be\label{uwrhoLessthanEq}
[u]_\rho\le [w]_\rho.
\ee
In view of Theorem \ref{GenEquivalent}, 
$$
\int_{\R^n}|Du|^pdx\le \int_{\R^n}|Dw|^pdx=\int_{u\le M}|Du|^pdx\le \int_{\R^n}|Du|^pdx.
$$
Combining this with \eqref{uwrhoLessthanEq} we conclude that $w$ is an extremal. By Corollary \ref{UniqueCor}, 
$$
u(x)=w(x)=\min\left\{u(x),M\right\}\le M, \quad x\in \R^n.
$$
\end{proof}
For the moment, let $u$ be the extremal discussed in the previous proposition. By Theorem \ref{GenEquivalent}, $u\in {\cal D}^{1,p}(\R^n)$ is $p$-harmonic on $\R^n\setminus \textup{supp}(\mu)$. In particular, as $\textup{supp}(\mu)$ is compact, $u$ is $p$-harmonic on an exterior domain and is uniformly bounded. We can then conclude the following limits by our recent work \cite{HyndSeuf2}.
\begin{cor}\label{LimitCor}
Assume $n\ge 2$, $S\in {\cal S}(n)$ and set $\mu=S{_\#}\rho$.  Further suppose
$u$ is an extremal of \eqref{UpsMorrey} with 
$$
[u]_\rho=\frac{\displaystyle\left|\int_{\R^n}u(x)d\mu(x)\right|}{\displaystyle\left|\int_{\R^n}xd\mu(x)\right|^{1-n/p}}.
$$
The limit
$$
\lim_{|x|\rightarrow\infty}u(x)
$$
exists and 
$$
\lim_{|x|\rightarrow\infty}|x||Du(x)|=0.
$$
\end{cor}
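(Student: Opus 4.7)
The plan is to recognize that the extremal $u$ is a bounded $p$-harmonic function on an exterior domain and then invoke the asymptotic results already available in \cite{HyndSeuf2}.

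First, by the equivalence $(i)\Longleftrightarrow(ii)$ of Theorem \ref{GenEquivalent}, the extremal $u$ solves the PDE $-\Delta_pu=c\mu$ in $\R^n$ for some $c\in\R$. Since $\textup{supp}(\rho)$ is compact and $S\in{\cal S}(n)$ is a similarity, $\textup{supp}(\mu)=S(\textup{supp}(\rho))$ is also compact. Thus $u$ is $p$-harmonic on the exterior domain $\Omega:=\R^n\setminus\textup{supp}(\mu)$; in particular, outside any large enough ball about the origin, $u$ is $p$-harmonic in the classical sense. Next, by Proposition \ref{PointwiseBoundProp}, $u$ is uniformly bounded with $\min_{\textup{supp}(\mu)}u\le u(x)\le \max_{\textup{supp}(\mu)}u$ throughout $\R^n$.

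Having placed $u$ in the class of bounded $p$-harmonic functions on exterior domains of $\R^n$ with $n\ge 2$ and $p>n$, I would then directly apply the results of \cite{HyndSeuf2}, which assert that every such function has a finite limit at infinity and satisfies the gradient decay $|x||Du(x)|\to 0$ as $|x|\to\infty$. Both conclusions of the corollary follow immediately.

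The main obstacle is really absorbed into the external reference: the nontrivial content is the exterior asymptotic theory for the $p$-Laplacian, i.e.\ showing that bounded $p$-harmonic functions on $\R^n\setminus K$ (with $K$ compact) stabilize at infinity and obey $|Du(x)|=o(|x|^{-1})$. If a self-contained proof were required instead of a citation, the key ingredients would be (a) a capacity-based argument exploiting that points have positive $p$-capacity when $p>n$, from which existence of the limit at infinity would follow by comparison with radial $p$-harmonic barriers; and (b) the interior gradient estimate for $p$-harmonic functions, of the form $\sup_{B_{R/2}(x)}|Du|\le CR^{-1}\,\textup{osc}_{B_R(x)}u$, applied on the ball $B_{|x|/2}(x)$ for large $|x|$: the oscillation of $u$ on that ball tends to $0$ by the existence of the limit, which upgrades into the stated decay $|x||Du(x)|\to 0$.
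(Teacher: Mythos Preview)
Your proposal is correct and matches the paper's argument exactly: the paper also observes (via Theorem \ref{GenEquivalent} and Proposition \ref{PointwiseBoundProp}) that $u$ is a bounded $p$-harmonic function on the exterior domain $\R^n\setminus\textup{supp}(\mu)$ and then simply invokes \cite{HyndSeuf2} for both limits. Your closing sketch of a self-contained argument is extra but consistent with what the cited reference establishes.
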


\begin{rem}
When $n=1$, the limits $\lim_{x\rightarrow\infty}u(x)$ and $\lim_{x\rightarrow-\infty}u(x)$ exist. However, these two limits are typically distinct.  See Remark \ref{nequal1Limit} below for more on this point. 
\end{rem}

%-----------------------------------------------------------------------
\subsection{Symmetry and antisymmetry}
We will now consider the scenario in which $\rho$ is invariant under a similarity transformation. We shall see that 
each extremal of \eqref{UpsMorrey} will have a corresponding invariance. In a similar manner, we will establish how 
the antisymmetry of extremals can be inherited from an antisymmetry property of $\rho$.   In the process, we will make use of the limit 
\be\label{SimilarityLimit}
\lim_{|y|\rightarrow\infty}|S(y)|=\infty,
\ee 
which holds for each $S\in {\cal S}(n)$. 

\begin{prop}\label{MainSymmetryProp}
Assume $n\ge 2$ and $v$ is an extremal for \eqref{UpsMorrey} with 
$$
[v]_\rho=\frac{\displaystyle\left|\int_{\R^n}v(y)d\rho(y)\right|}{\displaystyle\left|\int_{\R^n}yd\rho(y)\right|^{1-n/p}}.
$$
If
$$
T{_\#}\rho=\rho
$$
for some $T\in {\cal S}(n)$, then
$$
v=v\circ T.
$$
\end{prop}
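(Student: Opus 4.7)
The plan is to produce, from $v$, a second extremal $w$ that has the same $\rho$-integral as $v$, and then invoke the uniqueness corollary (Corollary \ref{UniqueCor}) to conclude that $v$ and $w$ differ by a constant, which must be zero.

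The first step is to note that the hypothesis $T{_\#}\rho=\rho$ actually forces $T$ to be an isometry. Writing $T(y)=\lambda Oy+z$ and using $\rho(\R^n)=0$,
\[
\int_{\R^n}y\,d\rho(y)=\int_{\R^n}x\,d(T{_\#}\rho)(x)=\int_{\R^n}T(y)\,d\rho(y)=\lambda O\int_{\R^n}y\,d\rho(y).
\]
Taking norms and using $\int_{\R^n}y\,d\rho(y)\ne 0$ from assumption \eqref{rhoAssump} gives $\lambda=1$, so $T$ is a rigid motion.

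Next, I would set $w(y):=v(T(y))=v(Oy+z)$. Since $\lambda=1$, the invariances of $\|D\cdot\|_p$ and $[\cdot]_\rho$ recorded at the start of Section \ref{NoncontSect} immediately give $\|Dw\|_p=\|Dv\|_p$ and $[w]_\rho=[v]_\rho$, so $w$ is again an extremal of \eqref{UpsMorrey}. Moreover, using $T{_\#}\rho=\rho$,
\[
\int_{\R^n}w\,d\rho=\int_{\R^n}v\circ T\,d\rho=\int_{\R^n}v\,d(T{_\#}\rho)=\int_{\R^n}v\,d\rho,
\]
so $|\int w\,d\rho|/|\int y\,d\rho|^{1-n/p}=[v]_\rho=[w]_\rho$, meaning $[w]_\rho$ is attained at $\mu=\rho$ as well. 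Corollary \ref{UniqueCor} (with $S=\mathrm{id}$, $\mu=\rho$, $u_1=v$, $u_2=w$) now applies and yields a constant $C\in\R$ with $v(y)-v(T(y))=C$ for every $y\in\R^n$.

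Finally, iterating this identity gives $v(y)-v(T^k(y))=kC$ for every $k\in\Z$. Proposition \ref{PointwiseBoundProp} asserts that $v$ is uniformly bounded on $\R^n$, so the left-hand side is bounded in $k$; this forces $C=0$, and hence $v=v\circ T$. The step I expect to be the main obstacle is exactly this last one: without the uniform bound from Proposition \ref{PointwiseBoundProp}, the uniqueness corollary alone would only deliver $v-v\circ T\equiv C$, and the constant could in principle be nonzero when $T$ has no fixed point (e.g.\ when $T$ is a pure translation or a screw motion). The pointwise bound, combined with the iteration $v(y)-v(T^k(y))=kC$, is what cleanly removes this ambiguity in all cases at once.
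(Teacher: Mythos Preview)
Your proof is correct and tracks the paper's argument verbatim through the point where Corollary \ref{UniqueCor} yields $v-v\circ T\equiv C$. The only divergence is in how the constant is killed. The paper appeals to Corollary \ref{LimitCor} (the limit $\lim_{|x|\to\infty}v(x)$ exists when $n\ge 2$) together with \eqref{SimilarityLimit} to write
\[
C=v(x)-v(T(x))=\lim_{|y|\to\infty}\bigl(v(y)-v(T(y))\bigr)=0,
\]
whereas you iterate to $v(y)-v(T^k(y))=kC$ and use only the uniform bound from Proposition \ref{PointwiseBoundProp}. Your route is marginally more elementary---it needs boundedness of $v$ but not the existence of the limit at infinity---and, as a side effect, it never invokes the hypothesis $n\ge 2$ (Proposition \ref{PointwiseBoundProp} carries no dimensional restriction), so your argument in fact proves the proposition for $n=1$ as well. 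The paper's argument, by contrast, genuinely uses $n\ge 2$ through Corollary \ref{LimitCor}.
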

\begin{proof}
Suppose $T(x)=\lambda Ox+z$ for some $\lambda>0$, $O\in\text{O}(n)$ and $z\in \R^n$. We first claim that $\lambda$ is necessarily equal to $1$.  To see this, we note
$$
\int_{\R^n}yd\rho(y)=\int_{\R^n}T(y)d\rho(y)=\lambda O\left(\int_{\R^n}yd\rho(y)\right).
$$
Taking the norm of both sides this equation gives
$$
\left|\int_{\R^n}yd\rho(y)\right|=\lambda\left|\int_{\R^n}yd\rho(y)\right|,
$$
which forces $\lambda=1$. In particular, $T(x)= Ox+z$. 

\par Arguing as we did in the proof of Corollary \ref{ExistCor}, we find $v\circ T$ is an extremal with 
$$
[v\circ T]_\rho=\frac{\displaystyle\left|\int_{\R^n}v\circ T(y)d\rho(y)\right|}{\displaystyle\left|\int_{\R^n}yd\rho(y)\right|^{1-n/p}}.
$$
By assumption,
$$
\int_{\R^n}v(y)d\rho(y)=\int_{\R^n}v\circ T(y)d\rho(y).
$$
It then follows that $v-v\circ T$ is constant throughout $\R^n$ by Corollary \ref{UniqueCor}. In view of \eqref{SimilarityLimit} and Corollary \ref{LimitCor}, 
$$
v(x)-v\circ T(x)=\lim_{|y|\rightarrow\infty}(v(y)-v\circ T(y))=\lim_{|y|\rightarrow\infty}(v(y)-v(y))=0.
$$
for each $x\in \R^n$. 
\end{proof} 
\begin{prop}\label{AntiSymmetryProp}
Assume $n\ge 2$ and $v$ is an extremal for \eqref{UpsMorrey} with 
$$
[v]_\rho=\frac{\displaystyle\left|\int_{\R^n}v(y)d\rho(y)\right|}{\displaystyle\left|\int_{\R^n}yd\rho(y)\right|^{1-n/p}}.
$$
If
$$
T{_\#}\rho=-\rho
$$
for some $T\in {\cal S}(n)$, then
\be\label{AntiSymmV}
v(x)+v(T(x))=v(y)+v(T(y))
\ee
for $x,y\in \R^n$.  Moreover, 
$$
\lim_{|x|\rightarrow\infty}v(x)=\frac{v(y)+v(T(y))}{2}.
$$
for each $y\in \R^n$. 
\end{prop}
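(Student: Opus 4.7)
The plan is to mimic the structure of Proposition \ref{MainSymmetryProp}, with the modification that the sign flip $T_\#\rho=-\rho$ is absorbed by a sign flip on $v$. Writing $T(x)=\lambda Ox+z$ with $\lambda>0$, $O\in\textup{O}(n)$, $z\in\R^n$, I would first pin down $\lambda=1$. Since $\rho(\R^n)=0$,
$$
\lambda O\int_{\R^n}y\,d\rho(y)=\int_{\R^n}T(y)\,d\rho(y)=\int_{\R^n}y\,d(T_\#\rho)(y)=-\int_{\R^n}y\,d\rho(y),
$$
and taking norms, using $\int_{\R^n}y\,d\rho\neq 0$, forces $\lambda=1$. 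Hence $T$ is a rigid motion and in particular $\|D(v\circ T)\|_p=\|Dv\|_p$.

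Next, set $w:=-v\circ T$. The antisymmetry hypothesis yields
$$
\int_{\R^n}w\,d\rho=-\int_{\R^n}v(T(y))\,d\rho(y)=-\int_{\R^n}v(x)\,d(T_\#\rho)(x)=\int_{\R^n}v\,d\rho.
$$
Consequently the ratio defining $[v]_\rho$ with $\mu=\rho$ evaluates identically on $w$, so $[w]_\rho\ge [v]_\rho$, while the Morrey inequality \eqref{UpsMorrey} together with $\|Dw\|_p=\|Dv\|_p$ gives the reverse inequality. Thus $w$ is an extremal of \eqref{UpsMorrey} satisfying condition (i) of Theorem \ref{GenEquivalent} with $\mu=\rho$. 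Since also $\int_{\R^n} v\,d\rho=\int_{\R^n} w\,d\rho$, Corollary \ref{UniqueCor} forces $v-w=v+v\circ T$ to be constant on $\R^n$, which is exactly \eqref{AntiSymmV}.

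Finally, to identify this constant I would send $|x|\to\infty$ in $v(x)+v(T(x))$. By \eqref{SimilarityLimit}, $|T(x)|\to\infty$ as well, so Corollary \ref{LimitCor} (this is where the hypothesis $n\ge 2$ is used) gives $v(x),v(T(x))\to L:=\lim_{|x|\to\infty}v(x)$. Since $v(x)+v(T(x))$ is constant in $x$ and equals $v(y)+v(T(y))$ for every $y$, the constant value must be $2L$, yielding the stated limit formula.

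The main obstacle is the sign bookkeeping: one must ensure that $\int_{\R^n} w\,d\rho$ and $\int_{\R^n} v\,d\rho$ agree as signed quantities, not merely in absolute value, so that Corollary \ref{UniqueCor} is applicable. This is precisely why the negation $v\mapsto -v$ in the definition of $w$ is necessary to compensate for the sign flip $T_\#\rho=-\rho$; once this pairing is set up, the uniqueness and asymptotic-flatness corollaries carry the rest of the argument with essentially no new ingredients beyond those already used for the symmetric case.
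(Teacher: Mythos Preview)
Your proposal is correct and follows essentially the same route as the paper: define $w=-v\circ T$, use $T_\#\rho=-\rho$ to get $\int w\,d\rho=\int v\,d\rho$, invoke Corollary \ref{UniqueCor} to conclude $v+v\circ T$ is constant, and then apply Corollary \ref{LimitCor} together with \eqref{SimilarityLimit} to identify that constant as $2\lim_{|x|\to\infty}v(x)$. The only differences are cosmetic: you spell out the $\lambda=1$ computation (which the paper absorbs into the phrase ``as in the proof of Proposition \ref{MainSymmetryProp}''), and your sandwich argument $[v]_\rho\le[w]_\rho\le C_*\|Dw\|_p=C_*\|Dv\|_p=[v]_\rho$ is a slightly more explicit way of arriving at the extremality of $w$ and the fact that its seminorm is attained at $\mu=\rho$.
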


\begin{proof}
As in the proof of Proposition \ref{MainSymmetryProp}, we find $w=-v\circ T$ is an extremal and 
$$
[w]_\rho=\frac{\displaystyle\left|\int_{\R^n}w(y)d\rho(y)\right|}{\displaystyle\left|\int_{\R^n}yd\rho(y)\right|^{1-n/p}}.
$$
 Further, 
$$
\int_{\R^n}w(y)d\rho(y)=-\int_{\R^n}v\circ T(y)d\rho(y)=\int_{\R^n}v(y)d\rho(y).
$$
By Corollary \ref{UniqueCor}, $v-w$ is constant in $\R^n$. That is,
$$
v(x)+v(T(x))=v(y)+v(T(y)), \quad x,y\in \R^n.
$$
We can also employ Corollary \ref{LimitCor} and \eqref{SimilarityLimit} once again to find 
$$
v(y)+v(T(y))=\lim_{|x|\rightarrow\infty}(v(x)+v(T(x)))=2\lim_{|x|\rightarrow\infty}v(x).
$$
\end{proof}

\par The example where the above propositions are most useful is  
$$
\rho=\delta_{x_0}-\delta_{y_0}
$$
for distinct $x_0$ and $y_0$. As mentioned, the generalized Morrey extremals for this $\rho$ are extremals of Morrey's inequality \eqref{MorreyIneq}.  Let us see 
how Proposition \ref{MainSymmetryProp} can be used to show that Morrey extremals are axially symmetric about the line passing through the points which maximize its $1-n/p$ H\"older seminorm.

\begin{cor}\label{SymmCor}
Suppose $u\in {\cal D}^{1,p}(\R^n)$ a Morrey extremal which satisfies
$$
[u]_{1-n/p}=\frac{|u(x_0)-u(y_0)|}{|x_0-y_0|^{1-n/p}}.
$$
Then 
\be\label{AxiallySYmm}
u(O(x-x_0)+x_0)=u(x), \quad x\in \R^n
\ee
for each $O\in \textup{O}(n)$ such that
$$
O(y_0-x_0)=y_0-x_0.
$$
\end{cor}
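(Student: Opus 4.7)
The plan is to reduce this corollary to a direct application of Proposition \ref{MainSymmetryProp} with the two-point measure $\rho=\delta_{x_0}-\delta_{y_0}$. First I would note that this $\rho$ satisfies \eqref{rhoAssump}: the support is the two-point set $\{x_0,y_0\}$, $\rho(\R^n)=1-1=0$, and $\int_{\R^n} y\,d\rho(y)=x_0-y_0\neq 0$. As observed in the paper immediately after the definition of $[\cdot]_\rho$, we have $[u]_\rho=[u]_{1-n/p}$ for this choice of $\rho$, so the Morrey extremality of $u$ is the same as extremality in the generalized Morrey inequality \eqref{UpsMorrey} associated to $\rho$.

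Next I would verify hypothesis $(i)$ of Theorem \ref{GenEquivalent} (equivalently, the hypothesis of Proposition \ref{MainSymmetryProp}) is satisfied with the identity similarity $S=\text{id}$, so that $\mu=\rho$. Indeed, for $\mu=\delta_{x_0}-\delta_{y_0}$, one has
\[
\frac{\bigl|\int_{\R^n} u\,d\mu\bigr|}{\bigl|\int_{\R^n} x\,d\mu(x)\bigr|^{1-n/p}}=\frac{|u(x_0)-u(y_0)|}{|x_0-y_0|^{1-n/p}}=[u]_{1-n/p}=[u]_\rho,
\]
which is exactly the assumption of the corollary.

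Then I would introduce the candidate similarity $T(x):=O(x-x_0)+x_0$. Since $\lambda=1$ and $z=x_0-Ox_0$, we have $T\in{\cal S}(n)$. A direct computation using $O(y_0-x_0)=y_0-x_0$ gives $T(x_0)=x_0$ and $T(y_0)=O(y_0-x_0)+x_0=y_0$, hence
\[
T{_\#}\rho=\delta_{T(x_0)}-\delta_{T(y_0)}=\delta_{x_0}-\delta_{y_0}=\rho.
\]
If $n\ge 2$, Proposition \ref{MainSymmetryProp} now applies and yields $u=u\circ T$, which is exactly \eqref{AxiallySYmm}. For $n=1$ the statement is vacuous: the condition $O(y_0-x_0)=y_0-x_0$ with $O\in\textup{O}(1)=\{\pm 1\}$ forces $O=1$.

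There is no real obstacle here; the only mild subtlety is confirming that the similarity $T$ constructed from $O$ genuinely fixes both atoms of $\rho$ (so that $T{_\#}\rho=\rho$ rather than $-\rho$, which would instead invoke the antisymmetry result Proposition \ref{AntiSymmetryProp}), and checking that $[u]_\rho$ is realized at $S=\text{id}$ so the hypothesis of Proposition \ref{MainSymmetryProp} is literally met. Both are immediate from the hypotheses of the corollary.
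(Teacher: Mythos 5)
Your proof is correct and follows essentially the same route as the paper: construct $T(x)=O(x-x_0)+x_0$, check $T{_\#}\rho=\rho$, and invoke Proposition~\ref{MainSymmetryProp}. Your extra care in confirming that the hypothesis of Proposition~\ref{MainSymmetryProp} (i.e.\ $[u]_\rho$ realized at $S=\textup{id}$) is met, and your observation that the $n=1$ case is vacuous, are both correct and are left implicit in the paper.
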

\begin{proof}
Set
$$
T(x)=O(x-x_0)+x_0, \quad x\in \R^n
$$
and note
$$
T{_\#}(\delta_{x_0}-\delta_{y_0})=\delta_{x_0}-\delta_{y_0}.
$$
Proposition \ref{MainSymmetryProp} then applies to give \eqref{AxiallySYmm}.
\end{proof}
We can also use Proposition \ref{AntiSymmetryProp} to show that the Morrey extremal $u$ featured in the previous 
corollary is antisymmetric about the hyperplane in $\R^n$ with normal $x_0-y_0$ and which passes through the midpoint of $x_0$ and $y_0$.
\begin{cor}\label{AntiSymmCor}
Suppose $u\in {\cal D}^{1,p}(\R^n)$ is a Morrey extremal which satisfies
$$
[u]_{1-n/p}=\frac{|u(x_0)-u(y_0)|}{|x_0-y_0|^{1-n/p}}.
$$
Then 
\be\label{AntiSymm}
u\left(x -2\frac{\left((x_0-y_0)\cdot (x-\frac{1}{2}(x_0+y_0)\right)}{|x_0-y_0|^2}(x_0-y_0)\right)-\frac{u(x_0)+u(y_0)}{2}
=-\left(u(x)-\frac{u(x_0)+u(y_0)}{2}\right)
\ee
for each $x\in \R^n$ and 
$$
\lim_{|x|\rightarrow \infty}u(x)=\frac{1}{2}(u(x_0)+u(y_0)).
$$
\end{cor}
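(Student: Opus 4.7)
The plan is to apply Proposition \ref{AntiSymmetryProp} to the measure $\rho = \delta_{x_0} - \delta_{y_0}$. For this choice the seminorm $[u]_\rho$ coincides with $[u]_{1-n/p}$ (as noted in the introduction), so the hypothesis that $u$ is a Morrey extremal realizing its seminorm at the pair $(x_0, y_0)$ places $u$ squarely within the setting of that proposition, taking $S$ equal to the identity so that $\mu = \rho$.

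The key step is exhibiting a suitable $T \in {\cal S}(n)$ with $T_{\#}\rho = -\rho$. I would take $T$ to be the affine reflection across the hyperplane perpendicular to $x_0 - y_0$ passing through the midpoint $\tfrac{1}{2}(x_0 + y_0)$, namely
\[
T(x) := x - 2\frac{(x_0-y_0)\cdot\bigl(x-\tfrac{1}{2}(x_0+y_0)\bigr)}{|x_0-y_0|^2}(x_0-y_0),
\]
which is precisely the map appearing in \eqref{AntiSymm}. Since $T$ is an isometry it belongs to ${\cal S}(n)$, and a short computation verifies $T(x_0) = y_0$ and $T(y_0) = x_0$. Consequently $T_{\#}\rho = \delta_{T(x_0)} - \delta_{T(y_0)} = \delta_{y_0} - \delta_{x_0} = -\rho$, as required.

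With the hypotheses of Proposition \ref{AntiSymmetryProp} verified, its conclusion $u(x) + u(T(x)) = u(y) + u(T(y))$ holds for all $x,y \in \R^n$. Specializing $y = x_0$ and using $T(x_0) = y_0$ yields $u(x) + u(T(x)) = u(x_0) + u(y_0)$, which is precisely \eqref{AntiSymm} after subtracting $\tfrac{u(x_0)+u(y_0)}{2}$ from both sides. The asymptotic formula from the same proposition, evaluated at $y = x_0$, immediately gives $\lim_{|x|\to\infty} u(x) = \tfrac{1}{2}(u(x_0)+u(y_0))$. There is no serious obstacle; the only substantive check is confirming that the displayed formula for $T$ is an isometric involution swapping $x_0$ and $y_0$, which is a routine calculation using the definition of the midpoint.
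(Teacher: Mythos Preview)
Your proposal is correct and follows essentially the same route as the paper: define $T$ as the reflection across the perpendicular bisector of the segment $[x_0,y_0]$, verify $T(x_0)=y_0$ and $T(y_0)=x_0$ so that $T_{\#}\rho=-\rho$, and then invoke Proposition~\ref{AntiSymmetryProp}. Your write-up is slightly more explicit about why the hypotheses of that proposition are met, but the argument is the same.
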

\begin{proof}
It is easily seen that
$$
T(x)=x -2\frac{\left((x_0-y_0)\cdot (x-\frac{1}{2}(x_0+y_0)\right)}{|x_0-y_0|^2}(x_0-y_0)
$$
is a similarity transformation of $\R^n$. As $T(x_0)=y_0$ and  $T(y_0)=x_0,$ 
$$
T{_\#}(\delta_{x_0}-\delta_{y_0})=-(\delta_{x_0}-\delta_{y_0}).
$$
By Proposition \ref{AntiSymmetryProp},
$$
u(T(x))+u(x)=u(x_0)+u(T(x_0))=u(x_0)+u(y_0)
$$
which is another way of writing \eqref{AntiSymm}.  Proposition \ref{AntiSymmetryProp} also gives 
$$
\lim_{|x|\rightarrow \infty}u(x)=\frac{1}{2}(u(x_0)+u(T(x_0)))=\frac{1}{2}(u(x_0)+u(y_0)).
$$
\end{proof}
\begin{rem}
We verified Corollaries \ref{SymmCor} and \ref{AntiSymmCor} in previous work based on the fact that the extremal in question satisfies the PDE \eqref{TwoDeltaPDE} \cite{HyndSeuf}.
\end{rem}

%%%%%%%%%%%%%%%%%%%%%%%%%%%%%%%%%%%%%%%%%%
\section{Stability}\label{StabilitySect}
In our proof of Theorem \ref{StabilityThm} below, we will make use 
of two classical inequalities due to Clarkson \cite{MR1501880}. The first one is 
\be\label{Clarkson1}
\left\|\frac{Dv-Dw}{2}\right\|_p^p+\left\|\frac{Dv+Dw}{2}\right\|_p^p\le \frac{1}{2}\|Dv\|_p^p+  \frac{1}{2}\|Dw\|_p^p
\ee
and it holds $v,w\in {\cal D}^{1,p}(\R^n)$ and $p>2$. The second inequality is 
\be\label{Clarkson2}
\left\|\frac{v'-w'}{2}\right\|_p^{\frac{p}{p-1}}+\left\|\frac{v'+w'}{2}\right\|_p^{\frac{p}{p-1}}\le \left(\frac{1}{2}\|v'\|_p^p+  \frac{1}{2}\|w'\|_p^p\right)^{\frac{1}{p-1}}
\ee
which holds for $1<p\le 2$ and $v,w\in {\cal D}^{1,p}(\R)$.

\begin{proof}[Proof of Theorem \ref{StabilityThm}]
Assume 
$$
[v]_\rho=\frac{\displaystyle\left|\int_{\R^n}v(x)d\mu(x)\right|}{\displaystyle\left|\int_{\R^n}xd\mu(x)\right|^{1-n/p}}
$$
where $\mu=S{_\#}\rho$ for some $S\in {\cal S}(n)$. Let $u\in {\cal D}^{1,p}(\R^n)$ be an extremal such that 
$$
[u]_\rho=\frac{\displaystyle\left|\int_{\R^n}u(x)d\mu(x)\right|}{\displaystyle\left|\int_{\R^n}xd\mu(x)\right|^{1-n/p}}
$$
and 
$$
\int_{\R^n}u(x)d\mu(x)=\int_{\R^n}v(x)d\mu(x).
$$

\par It is routine to check that 
$$
[v]_\rho=\left[\frac{u+v}{2}\right]_\rho.
$$
If $p>2$, we apply \eqref{GeneralizedMorrey}, \eqref{Clarkson1}, and Theorem \ref{GenEquivalent} to get 
\begin{align*}
\left(\frac{C}{2}\right)^p\|Du-Dv\|_p^p+[v]_\rho^p&=C^p\left\|\frac{Du-Dv}{2}\right\|_p^p+\left[\frac{u+v}{2}\right]_\rho^p\\
&\le C^p\left(\left\|\frac{Du-Dv}{2}\right\|_p^p+\left\|\frac{Du+Dv}{2}\right\|_p^p\right)\\
&\le C^p\left(\frac{1}{2}\|Du\|_p^p+  \frac{1}{2}\|Dv\|_p^p\right)\\
&\le C^p\|Dv\|_p^p.
\end{align*}
If $1<p\le 2$, we employ \eqref{GeneralizedMorrey}, \eqref{Clarkson2}, and Theorem \ref{GenEquivalent} to find 
\begin{align*}
\left(\frac{C}{2}\right)^{\frac{p}{p-1}}\|u'-v'\|_p^{\frac{p}{p-1}}+[v]_\rho^{\frac{p}{p-1}}&=C^{\frac{p}{p-1}}\left\|\frac{u'-v'}{2}\right\|_p^{\frac{p}{p-1}}+\left[\frac{u+v}{2}\right]_\rho^{\frac{p}{p-1}}\\
&\le C^{\frac{p}{p-1}}\left(\left\|\frac{u'-v'}{2}\right\|_p^{\frac{p}{p-1}}+\left\|\frac{u'+v'}{2}\right\|_p^{\frac{p}{p-1}}\right)\\
&\le C^{\frac{p}{p-1}}\left(\frac{1}{2}\|u'\|_p^p+  \frac{1}{2}\|v'\|_p^p\right)^{\frac{1}{p-1}}\\
&\le C^{\frac{p}{p-1}}\|v'\|_p^{\frac{p}{p-1}}.
\end{align*}
\end{proof}

%%%%%%%%%%%%%%%%%%%%%%%%%%%%%%%%%%%%%%%%%%
\section{Remarks on the best constant}\label{ExampleSect}
In this final section, we will write down an extremal and the best constant $C_*$ for the generalized Morrey inequality \eqref{UpsMorrey} when $n=1$. We will also show how these ideas translate to a duality formula for the best constant for $n\ge 2$.

%------------------------------------------------
\subsection{One spatial dimension}
Suppose $n=1$ and $\rho$ satisfies \eqref{rhoAssump}. We define the distribution function of $\rho$ as follows
$$
F(x):=\rho((-\infty,x]), \quad x\in \R.
$$
From this definition and our assumptions on $\rho$, $F$ is bounded, right continuous, and has bounded variation. Moreover, 
there is $a>0$ such that 
\be\label{CompactSupportF}
F(x)=0\;\text{ for all }\; |x|\ge a.
\ee
We will use this $F$ to express the extremals of \eqref{UpsMorrey} and the best constant. In our formulae below, we will write $q$ for the H\"older conjugate to $p$ 
$$
\frac{1}{p}+\frac{1}{q}=1.
$$

\begin{prop}
The function 
\be\label{vFunction1D}
v(x)=-\int^x_{-\infty}|F(y)|^{q-2}F(y)dy,\quad x\in \R
\ee
is an extremal of \eqref{UpsMorrey} with 
\be\label{voneDcond}
[v]_\rho=\frac{\displaystyle\left|\int^\infty_{-\infty}v(y)d\rho(y)\right|}{\displaystyle\left|\int^\infty_{-\infty}yd\rho(y)\right|^{1-1/p}}.
\ee
Moreover, 
$$
C_*=\frac{\displaystyle\left(\int^\infty_{-\infty}|F(y)|^qdy\right)^{1/q}}{\displaystyle\left|\int^\infty_{-\infty}yd\rho(y)\right|^{1/q}}.
$$
\end{prop}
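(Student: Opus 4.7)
The plan is to reduce everything to a single integration-by-parts identity that converts $\int u\,d\rho$ into an integral involving $F$ and $u'$, after which Hölder's inequality and its equality case handle the rest.

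First I would verify that $v\in{\cal D}^{1,p}(\R)$ makes sense: its distributional derivative is $v'(x)=-|F(x)|^{q-2}F(x)$, which is bounded and supported in $[-a,a]$ by \eqref{CompactSupportF}, and the elementary identity $p(q-1)=q$ yields
$$\|v'\|_p^p=\int_\R|F(y)|^{q}\,dy.$$

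Next I would establish the core identity: for every $u\in{\cal D}^{1,p}(\R)$,
$$\int_\R u\,d\rho=-\int_\R F(y)\,u'(y)\,dy. \qquad (\ast)$$
This follows by choosing $a$ strictly larger than $\max\{|y|:y\in\text{supp}(\rho)\}$ and integrating by parts on $[-a,a]$; the boundary terms vanish because $F(\pm a)=0$. Specialized to $u(y)=y$ it gives $\int y\,d\rho=-\int F\,dy$, so $|\int y\,d\rho|=|\int F\,dy|$.

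I would then prove the upper bound $C_*\le\|F\|_q/|\int y\,d\rho|^{1/q}$ as follows. Fix $S(y)=\lambda Oy+z\in{\cal S}(1)$, put $\mu=S_{\#}\rho$ and $\tilde u=u\circ S$. A change of variable gives $\|\tilde u'\|_p=\lambda^{1/q}\|u'\|_p$ and $|\int x\,d\mu|=\lambda|\int y\,d\rho|$. Applying $(\ast)$ to $\tilde u$ and Hölder's inequality yields
$$\Bigl|\int u\,d\mu\Bigr|=\Bigl|\int\tilde u\,d\rho\Bigr|=\Bigl|\int F\,\tilde u'\,dy\Bigr|\le\|F\|_q\|\tilde u'\|_p=\|F\|_q\,\lambda^{1/q}\|u'\|_p,$$
so the ratio defining $[u]_\rho$ is bounded by $\|F\|_q\|u'\|_p/|\int y\,d\rho|^{1/q}$ uniformly in $S$.

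Finally I would check that the identity transformation $S=\text{id}$ and the function $v$ saturate every inequality above. Since $v'=-|F|^{q-2}F$ is the equality case in Hölder's inequality paired with $F$,
$$\int v\,d\rho=-\int F\,v'\,dy=\int|F|^q\,dy=\|F\|_q^q.$$
Using $\|v'\|_p=\|F\|_q^{q/p}$ together with the algebraic identity $1+q/p=q$, this gives
$$\frac{|\int v\,d\rho|}{|\int y\,d\rho|^{1/q}}=\frac{\|F\|_q^q}{|\int F\,dy|^{1/q}}=\frac{\|F\|_q}{|\int y\,d\rho|^{1/q}}\,\|v'\|_p.$$
Combined with the upper bound this identifies $C_*=\|F\|_q/|\int y\,d\rho|^{1/q}$, shows that $v$ is an extremal, and yields \eqref{voneDcond}. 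The only mildly delicate step is $(\ast)$: one must be careful about possible atoms of $\rho$ at boundary points and the right-continuity convention defining $F$, but choosing $a$ strictly outside $\text{supp}(\rho)$ makes the boundary contributions in the Riemann--Stieltjes integration by parts vanish cleanly.
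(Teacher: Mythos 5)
Your proof is correct, but it follows a genuinely different route than the paper's. The paper's proof shows that $v$ satisfies the ODE $-(|v'|^{p-2}v')'=\rho$ via the same Riemann--Stieltjes integration by parts, and then invokes the machinery of Theorem \ref{GenEquivalent} (the equivalence $(ii)\Rightarrow(i)$, together with Corollary \ref{ExistCor}) to conclude that $v$ is an extremal attaining its seminorm at $S=\textup{id}$, after which $C_*=[v]_\rho/\|v'\|_p$. You bypass Theorem \ref{GenEquivalent} entirely: after the key identity $(\ast)$ you prove the uniform bound
\[
\frac{\bigl|\int u\,d\mu\bigr|}{\bigl|\int x\,d\mu\bigr|^{1/q}}\le\frac{\|F\|_q}{\bigl|\int y\,d\rho\bigr|^{1/q}}\,\|u'\|_p
\]
for every $S\in{\cal S}(1)$ using only the scaling identities $\|\tilde u'\|_p=\lambda^{1/q}\|u'\|_p$, $|\int x\,d\mu|=\lambda|\int y\,d\rho|$ (the latter using $\rho(\R)=0$), and H\"older, and then check that $v$ with $S=\textup{id}$ saturates H\"older. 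This is more elementary and self-contained, and it buys something the paper's argument does not: it proves the one-dimensional case of the duality formula \eqref{DualityFormula} directly, rather than deriving it as a corollary. What it does not exhibit explicitly is the PDE that $v$ solves, which the paper uses to tie this example to the general structure in Theorem \ref{GenEquivalent}. Your scaling computation $\|\tilde u'\|_p=\lambda^{1/q}\|u'\|_p$ is correct for both $O=+1$ and $O=-1$, and the exponent identities $p(q-1)=q$ and $1+q/p=q$ are used correctly, so the argument is complete.
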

\begin{proof}
Differentiating $v$, we find 
$$
-|v'|^{p-2}v'=F
$$
almost everywhere in $\R.$ For $\phi \in {\cal D}^{1,p}(\R)$, we can multiply the above equation with $\phi'$ and integrate by parts to get 
\begin{align*}
\int^\infty_{-\infty}|v'(x)|^{p-2}v'(x)\phi'(x)dx&=-\int^\infty_{-\infty}F(x)\phi'(x)dx\\
&=-\int^a_{-a}F(x)\phi'(x)dx\\
&=- F(x)\phi(x)\Big|^a_{-a}+\int_{(-a,a]}\phi(x)d\rho(x)\\
&=\int^\infty_{-\infty}\phi(x)d\rho(x).
\end{align*} 
Here $a$ is chosen as in \eqref{CompactSupportF}; and the integration by parts is justified as $\phi$ is absolutely continuous and $F$ has bounded variation (Theorem 3.3 in \cite{Folland}).   We conclude that $v$ satisfies 
\be\label{onedimODE}
-(|v'|^{p-2}v')'=\rho
\ee
in $\R$. 

\par Theorem \ref{GenEquivalent} then implies that $v$ is an extremal which satisfies \eqref{voneDcond}. Since
$$
\int^\infty_{-\infty}v(y)d\rho(y)=\int^\infty_{-\infty}|v'(y)|^pdy=\int^\infty_{-\infty}|F(y)|^qdy,
$$
we also have
$$
C_*=\frac{[v]_\rho}{\|v'\|_p}=
\frac{\displaystyle\left(\int^\infty_{-\infty}|F(y)|^qdy\right)^{1/q}}{\displaystyle\left|\int^\infty_{-\infty}yd\rho(y)\right|^{1/q}}.
$$
\end{proof} 
\begin{rem}\label{nequal1Limit}
Observe that $v$ defined in \eqref{vFunction1D} satisfies 
$$
v(x)=
\begin{cases}
0, \; &x\le -a
\\\\
-\displaystyle\int^\infty_{-\infty}|F(y)|^{q-2}F(y)dy,\; & x\ge a.
\end{cases}
$$
In particular, 
$$
\lim_{x\rightarrow-\infty}v(x)=0\quad \text{and}\quad \lim_{x\rightarrow\infty}v(x)=-\int^\infty_{-\infty}|F(y)|^{q-2}F(y)dy.
$$
\end{rem}

%------------------------------------------------
\subsection{Duality}
The observations we made for $n=1$ can be extended as follows.  We will again use $q$ for the H\"older conjugate to $p$ and say that 
$$
\text{div}F=\rho
$$
in $\R^n$ provided  
$$
-\int_{\R^n}Dv\cdot Fdy=\int_{\R^n}vd\rho
$$
for $v\in{\cal D}^{1,p}(\R^n)$.  Here $F\in L^q(\R^n; \R^n)$.
\begin{prop} Suppose $\rho$ satisfies \eqref{rhoAssump}. Then
\be\label{DualityFormula}
\sup_{\|Du\|_p\le 1}\left|\int_{\R^n}u(y)d\rho(y)\right|=\inf\left\{\left(\int_{\R^n}|F(y)|^qdy\right)^{1/q}: \textup{div}F=\rho\;\textup{in}\;\R^n\right\}.
\ee
\end{prop}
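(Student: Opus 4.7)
The formula asserts that the norm of the linear functional $u\mapsto\int u\,d\rho$ on $\mathcal{D}^{1,p}(\R^n)$ (with the seminorm $\|D\cdot\|_p$) equals the infimum over $L^q$-primitives $F$ of the divergence equation $\mathrm{div}\,F=\rho$. This is exactly the shape of a Hahn--Banach/Riesz duality, so my plan is to prove one inequality by H\"older and the reverse by extending a functional from the subspace of gradients to all of $L^p(\R^n;\R^n)$.

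For the easy direction ($\mathrm{LHS}\le\mathrm{RHS}$), suppose $F\in L^q(\R^n;\R^n)$ satisfies $\mathrm{div}\,F=\rho$. By the definition of this equation and H\"older's inequality, for every $u\in\mathcal{D}^{1,p}(\R^n)$ with $\|Du\|_p\le 1$,
\[
\Bigl|\int_{\R^n}u\,d\rho\Bigr|=\Bigl|\int_{\R^n}Du\cdot F\,dy\Bigr|\le\|Du\|_p\|F\|_q\le\|F\|_q.
\]
Taking the supremum over $u$ and then the infimum over $F$ yields $\mathrm{LHS}\le\mathrm{RHS}$.

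For the reverse direction, let $K$ denote the left hand side. Consider the linear subspace
\[
V:=\bigl\{Du:u\in\mathcal{D}^{1,p}(\R^n)\bigr\}\subset L^p(\R^n;\R^n),
\]
and define $L:V\to\R$ by $L(Du):=\int_{\R^n}u\,d\rho$. The key observation is that $L$ is well defined: if $Du_1=Du_2$, then $u_1-u_2$ is constant, and since $\rho(\R^n)=0$ by \eqref{rhoAssump}, the integral $\int(u_1-u_2)\,d\rho$ vanishes. By the very definition of $K$, the functional $L$ satisfies $|L(Du)|\le K\|Du\|_p$ on $V$. By the Hahn--Banach theorem, extend $L$ to a bounded linear functional $\widetilde{L}$ on all of $L^p(\R^n;\R^n)$ with the same norm, and then by the Riesz representation theorem (using $1<p<\infty$) there is $F\in L^q(\R^n;\R^n)$ with $\|F\|_q\le K$ and
\[
\widetilde{L}(\Phi)=-\int_{\R^n}\Phi\cdot F\,dy\quad\text{for every }\Phi\in L^p(\R^n;\R^n).
\]
Applying this with $\Phi=Dv$ for an arbitrary $v\in\mathcal{D}^{1,p}(\R^n)$ gives $\int v\,d\rho=-\int Dv\cdot F\,dy$, which is precisely the statement $\mathrm{div}\,F=\rho$. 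Hence $F$ is admissible in the infimum on the right and $\mathrm{RHS}\le\|F\|_q\le K$.

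The only real subtlety, and the step I would be most careful about, is the well-definedness of $L$ on $V$, which is where the hypothesis $\rho(\R^n)=0$ enters in an essential way; everything else is a direct application of Hahn--Banach and Riesz. One need not invoke existence of extremals, though a posteriori, using Theorem \ref{GenEquivalent} with $\mu=\rho$, one sees that the infimum on the right is actually attained by $F=-|Du|^{p-2}Du/c$ for an appropriate extremal $u$.
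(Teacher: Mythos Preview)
Your proof is correct, but it takes a different route from the paper's. The paper proves the easy inequality exactly as you do (H\"older), and for the reverse it appeals to Corollary~\ref{ExistCor} and Theorem~\ref{GenEquivalent} to produce a normalized extremal $v$ with $\|Dv\|_p=1$ satisfying $-\Delta_p v=c\rho$ for some $c>0$, and then checks directly that $F=-c^{-1/(p-1)}|Dv|^{p-2}Dv$ satisfies $\mathrm{div}\,F=\rho$ with $\|F\|_q=\int v\,d\rho$, so equality holds in \eqref{DualityFormula2}. Your Hahn--Banach/Riesz argument is more self-contained: it uses only $\rho(\R^n)=0$ and abstract duality, and does not rely on the existence theory for extremals developed in Sections~\ref{NoncontSect}--\ref{EquivSect}. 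The paper's approach, on the other hand, is constructive and in keeping with its theme of tying the best constant to the PDE $-\Delta_p u=c\rho$; it also makes explicit which $F$ attains the infimum. (In fact your approach also yields an attained infimum, since the Hahn--Banach extension has norm exactly $K$ and Riesz returns an $F$ with $\|F\|_q=K$; your closing remark slightly undersells this.)
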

\begin{proof}
Suppose $u\in {\cal D}^{1,p}(\R^n)$ satisfies $\|Du\|_p\le 1$ and that $F\in L^q(\R^n; \R^n)$ fulfills $\text{div}F=\rho$ in $\R^n$. By H\"older's inequality, 
\be\label{DualityFormula2}
\left|\int_{\R^n}u(y)d\rho(y)\right|=\left|\int_{\R^n}Du(y)\cdot F(y)dy\right|
\le \|Du\|_p\left(\int_{\R^n}|F(y)|^qdy\right)^{1/q}\le \left(\int_{\R^n}|F(y)|^qdy\right)^{1/q}.
\ee
This show's that the left hand side of \eqref{DualityFormula} is less than or equal to the right hand side. 

\par We claim that equality holds in \eqref{DualityFormula2} (and therefore in \eqref{DualityFormula}) for an extremal of the generalized Morrey inequality $v$ such that
$$
[v]_\rho=\frac{\displaystyle\int_{\R^n}v(y)d\rho(y)}{\displaystyle\left|\int_{\R^n}yd\rho(y)\right|^{1-n/p}}
$$
and $\|Dv\|_p=1$.  To see this, we recall that any such $v$ satisfies 
$$
-\Delta_pv=c\rho
$$
for some $c>0$. In order to conclude, we note the $L^q(\R^n; \R^n)$ mapping 
$$
F=-\frac{1}{c^{\frac{1}{p-1}}}|Dv|^{p-2}Dv
$$
satisfies $\text{div}F=\rho$ and 
$$
\int_{\R^n}vd\rho=-\int_{\R^n}Dv\cdot Fdy=\frac{1}{c^{\frac{1}{p-1}}}=\left(\int_{\R^n}|F(y)|^qdy\right)^{1/q}.
$$
\end{proof}
The duality formula \eqref{DualityFormula} gives us an expression for $C_*$. 
\begin{cor} Suppose $\rho$ satisfies \eqref{rhoAssump}. The best constant in the corresponding generalized Morrey inequality \eqref{UpsMorrey} is given by
$$
C_*=\frac{\inf\left\{\displaystyle\left(\int_{\R^n}|F(y)|^qdy\right)^{1/q}: \textup{div}F=\rho\;\textup{in}\;\R^n\right\}}{\displaystyle\left|\int_{\R^n}yd\rho(y)\right|^{1-n/p}}.
$$
\end{cor}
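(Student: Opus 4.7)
The plan is to first establish the identity
$$
C_*\left|\int_{\R^n}y\,d\rho(y)\right|^{1-n/p}\;=\;\sup_{\|Du\|_p\le 1}\left|\int_{\R^n}u(y)\,d\rho(y)\right|,
$$
and then apply the duality formula \eqref{DualityFormula} of the preceding proposition and divide through by $|\int y\,d\rho|^{1-n/p}$ to obtain the claimed expression for $C_*$.

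For the $(\ge)$ direction I would take $S=\mathrm{id}$ in the definition \eqref{rhoSemiNorm}, which gives $[u]_\rho\ge |\int u\,d\rho|/|\int y\,d\rho|^{1-n/p}$ for every $u\in\mathcal{D}^{1,p}(\R^n)$; combining this with \eqref{UpsMorrey} yields $|\int u\,d\rho|\le C_*|\int y\,d\rho|^{1-n/p}\|Du\|_p$, hence the inequality. For the $(\le)$ direction I would exploit the similarity invariance of $\|Du\|_p$: given $u\in\mathcal{D}^{1,p}(\R^n)$ and $S(y)=\lambda Oy+z$ in $\mathcal{S}(n)$, set $\tilde u(y):=\lambda^{n/p-1}u(S(y))$. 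The computations already carried out in the proof of Proposition~\ref{SeminormProp}, together with $\rho(\R^n)=0$, give
$$
\int u\,d(S_\#\rho)=\lambda^{1-n/p}\int\tilde u\,d\rho\quad\text{and}\quad\left|\int x\,d(S_\#\rho)\right|^{1-n/p}=\lambda^{1-n/p}\left|\int y\,d\rho\right|^{1-n/p},
$$
while the third invariance listed at the start of Section~\ref{NoncontSect} gives $\|D\tilde u\|_p=\|Du\|_p$. Consequently,
$$
\frac{|\int u\,d(S_\#\rho)|}{|\int x\,d(S_\#\rho)|^{1-n/p}\,\|Du\|_p}\;=\;\frac{|\int \tilde u\,d\rho|}{|\int y\,d\rho|^{1-n/p}\,\|D\tilde u\|_p}.
$$
Since $\tilde u$ ranges over all of $\mathcal{D}^{1,p}(\R^n)$ as $(u,S)$ varies (the reverse direction uses $S=\mathrm{id}$), taking the supremum of the left hand side over $(u,S)$---which equals $C_*$, by the very definition of $C_*$ as the best constant in \eqref{UpsMorrey} combined with the definition \eqref{rhoSemiNorm} of $[u]_\rho$---is bounded above by the supremum of the right hand side over $\tilde u$, which by positive homogeneity is $|\int y\,d\rho|^{-(1-n/p)}\sup_{\|Dv\|_p\le 1}|\int v\,d\rho|$.

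I do not anticipate any substantive obstacle: the proof is essentially bookkeeping. The similarity-invariance identity makes the inner supremum over $\mathcal{S}(n)$ in the definition of $[u]_\rho$ redundant once one takes the outer supremum over $u\in\mathcal{D}^{1,p}(\R^n)$, and the resulting pure variational problem is precisely the left hand side of \eqref{DualityFormula}. The only point requiring attention is the joint change of variables exchanging $(u,S)$ for $\tilde u$, which however reduces to the already-used identities $\rho(\R^n)=0$ and $\|D\tilde u\|_p=\|Du\|_p$.
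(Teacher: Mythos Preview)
Your argument is correct. The paper's proof reaches the same intermediate identity
\[
\left|\int_{\R^n}y\,d\rho(y)\right|^{1-n/p}C_*=\sup_{\|Dv\|_p=1}\left|\int_{\R^n}v(y)\,d\rho(y)\right|
\]
and then invokes the duality formula, just as you do. The difference is in how the $(\le)$ direction of this identity is obtained. The paper simply cites Corollary~\ref{ExistCor}: there exists an extremal $v$ whose seminorm $[v]_\rho$ is realized at $\mu=\rho$, and for that $v$ the ratio $|\int v\,d\rho|/\bigl(|\int y\,d\rho|^{1-n/p}\|Dv\|_p\bigr)$ equals $C_*$. You instead argue directly from the similarity invariance of $\|D\cdot\|_p$, showing that the double supremum over $(u,S)$ that defines $C_*$ collapses to a single supremum over $\tilde u$ once one makes the substitution $\tilde u=\lambda^{n/p-1}u\circ S$.

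Both routes rest on the same underlying invariance (indeed, Corollary~\ref{ExistCor} is itself proved by exactly this change of variables). The paper's version is shorter because the invariance has already been packaged into an earlier result; your version is more self-contained and, notably, does not require knowing that an extremal exists---it would establish the formula for $C_*$ even if the supremum in the definition of $C_*$ were not attained.
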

\begin{proof}
In view of the generalized Morrey inequality \eqref{UpsMorrey},
$$
\frac{\displaystyle\left|\int_{\R^n}v(y)d\rho(y)\right|}{\displaystyle\left|\int_{\R^n}yd\rho(y)\right|^{1-n/p}}\le C_*\|Dv\|_p
$$
for each $v\in {\cal D}^{1,p}(\R^n)$. By Corollary \ref{ExistCor}, equality holds for an appropriately chosen extremal $v$. In particular,  identity \eqref{DualityFormula} gives
\begin{align*}
\left|\int_{\R^n}yd\rho(y)\right|^{1-n/p}C_*&=\sup_{\|Dv\|_p=1}\left|\int_{\R^n}v(y)d\rho(y)\right|\\
&=\inf\left\{\displaystyle\left(\int_{\R^n}|F(y)|^qdy\right)^{1/q}: \textup{div}F=\rho\;\textup{in}\;\R^n\right\}.
\end{align*}
\end{proof}

% References
\bibliography{MultipoleBib}{}

\bibliographystyle{plain}

\typeout{get arXiv to do 4 passes: Label(s) may have changed. Rerun}

\end{document}